\numberwithin{equation}{section}
\newtheorem{example}{Example}[section]
\newtheorem{theorem}[example]{Theorem}
 \newtheorem{corollary}[example]{Corollary}
\newtheorem{remark}[example]{Remark}
\newtheorem*{maintheorem*}{Main Theorem}
\numberwithin{equation}{section}
\renewcommand{\i}{\ifmmode\mathit{\mathchar"7010 }\else\char"10 \fi}
\renewcommand{\j}{\ifmmode\mathit{\mathchar"7011 }\else\char"11 \fi}
\newcommand{\R}{\mathbb{R}}
\def\begi{\begin{itemize}}
\def\endi{\end{itemize}}
\def\bega{\begin{array}}
\def\enda{\end{array}}
\def\u{\mathbf{u}}
\def\n{\mathbf{n}}
\def\x{\mathbf{x}}
\def\f{\mathbf{f}}
\def\y{\mathbf{y}}
\def\R{\mathbb{R}}
\def\Xi{{\bm\xi}}
\newenvironment{Assumptions}
{%

\begin{enumerate}}%
{\end{enumerate}}
\journal{}
\begin{document}

\title{Dispersive effects in two- and three-dimensional peridynamics}

\author[dei]{A. Coclite\corref{cor}}
\ead{alessandro.coclite@poliba.it}

\author[dmmm]{G. M. Coclite}
\ead{giuseppemaria.coclite@poliba.it}

\author[ial]{G. Fanizza}
\ead{gfanizza@fc.ul.pt}

\author[dmmm]{F. Maddalena}
\ead{francesco.maddalena@poliba.it}

\cortext[cor]{Corresponding author}

\address[dei]{Dipartimento di Ingegneria Elettrica e dell'Informazione (DEI), Politecnico di Bari,\\ Via Re David 200 -- 70125 Bari, Italy}

\address[dmmm]{Dipartimento di Meccanica, Matematica e Management, Politecnico di Bari (DMMM),\\ Via Re David 200 -- 70125 Bari, Italy}
 
\address[ial]{Instituto de Astrofis\'ica e Ci\^encias do Espa\c{c}o, Faculdade de Ci\^encias, Universidade de Lisboa,\\
Edificio C8, Campo Grande, P-1749-016, Lisbon, Portugal}

\begin{abstract}
In this paper we study the dispersive properties related to a model of peridynamic evolution, governed by a non local  initial value problem, in the cases of two and three spatial dimensions.  The features  of the wave propagation characterized by the nontrivial interactions between nonlocality and the regimes of low and high frequencies are studied and suitable numerical investigations   
are exposed.

\begin{keyword}
Peridynamics \sep nonlocal Continuum Mechanics \sep Elasticity \sep Dispersion

\MSC[2020] 74A70 \sep 74B10 \sep 70G70 \sep 35Q70
\end{keyword}
\end{abstract}

\maketitle

\section*{Introduction}
As it is well known, an  evolution partial differential equation is dispersive if, when no boundary conditions are imposed, its wave solutions spread out in space as they evolve in time. This phenomenon is ruled by the so called {\it dispersion relation} which states that plane waves travel at different speeds according to their wave number, hence the spreading of a wave packet occurs as the time evolves \cite{Ablo, SSb, T}. The dispersion relation is intrinsically  connected to the differential operator characterizing the evolution equation, then when the evolution problem is governed by nonlocal operators, as in the case of peridynamic theory, it is a natural question to investigate the dispersive properties exhibited in these situations. Peridynamics, initiated by S.A. Silling (see \cite{Sill4, Sill, Sill1, Sill3, Sill2}), constitutes a nonlocal mechanical theory in which the evolution problem is governed by an integral operator which takes into account a scale length characterizing the dynamical  interactions inside the material.
In some previous works (\cite{Coclite_2021, CDFMV})  we have focused  on the analysis of the dispersive properties of the nonlocal evolution equation related to a model of peridynamics, studied in detail in \cite{Coclite_2018, EP1, EP, EEM}, by limiting the analysis to the scalar case modeling a one-dimensional infinite material body. In that case the study of the  interplay between nonlocality and dispersion has revealed interesting features related to the asymptotic at low and high frequencies, suggesting new pictures in the framework of wave propagation in continua where nonlocal characteristics are  taken into account. The numerical aspects of the problem have been studied in 
\cite{CCMP, CFLMP, DCFP}.
 
In this paper we extend the analysis of these phenomena to the vectorial case, namely we study the dispersive properties of the peridynamic evolution equation in the physical relevant cases of two and three spatial dimensions. The results obtained show that the two- and three-dimensional cases share the same scaling with the uni-dimensional solutions since the dispersive relations exhibit  the same dependence on the nonlocal interaction length $\delta$ and this is due to the scaling of the elastic parameter $\kappa$ vs the interaction length  $\delta$, hence  this dependence has  universal character,  regardless of the dimension of the system. Moreover, the dispersive relations  depends solely on the modulus  of the  Fourier  variable $\xi$ and the two scenario related to $\xi\delta\ll 1$ and $\xi\delta\gg 1$ are analyzed in detail.

The paper is organized as follows. In Section~\ref{linmod} the general equations governing the peridynamic model studied here are exposed and the results pertaining the linear scalar problem are recalled. In Section~\ref{2dim} and Section~\ref{3dim} the dispersion relations and the relative asymptotic properties for the two and three dimensional problems are obtained. In Section~\ref{results} the results deduced through  the previous analysis are carefully discussed and suitable numerical investigations are deepen by exploiting the evolution of a class of initial data.

\section{The peridynamic model}
\label{linmod}

In~\cite{Coclite_2018} a general model for nonlocal continuum mechanics was proposed and studied by exploiting the analytic aspects of global solutions in energy space in the framework of nonlinear hyperelastic constitutive assumptions. Specifically, the momentum balance  equation of motion of an infinite material body takes the form of the following  initial-value problem:

\begin{equation}
\label{eq:CP}
\begin{cases}
\partial_{tt} \u(t,\x)=(K\u(t,\cdot))(\x),&\quad \x\in \R^N,\: t >0,\\
\u(0,\x)=\u_0(\x),\:\partial_t \u(0,\x)=\mathbf{v}_0(\x),&\quad \x\in\R^N,
\end{cases}
\end{equation}
where
\begin{equation}
\label{eq:operator}
{(K\u(t,\cdot))(\x):= \int_{B_\delta(\x)}\mathbf{f}(\x'-\x,\u(\x')-\u(\x))\,d\x',\quad {\mbox{ for every }}\,\x\in\mathbb{R}^N} \, .
\end{equation}
$N=1,2, 3$ corresponding to physically meaningful cases; $\u\in\R^{N}$ denotes the displacement field and $\delta>0$ characterizes  the non local interaction range governed by the integral kernel~$K$. The internal force $\mathbf{f}:\Omega \rightarrow \mathbb{R}^N$, with $\Omega:=(\mathbb{R}^{N}\setminus\{\mathbf 0\})\times\mathbb{R}^N$, is supposed to satisfy the following general constitutive assumptions:
\begin{Assumptions}
\item  $\f\in C^1(\Omega;\mathbb{R}^N)$;
\item \label{ass:f.2} $\f(-\y,-\u)=-\f(\y,\u),$ for every $(\y,\,\u)\in\Omega\times \R^N$;
\item \label{ass:f.5} there exists a function $\Phi\in C^2(\Omega)$ such that
\begin{equation*}
 \f=\nabla_{\u}\Phi,\qquad \Phi(\y,\u)=\kappa \frac{|\u|^p}{|\y|^{N+\alpha p}}+
 \Psi(\y,\u),\quad {\mbox{ for every }}\, (\y,\,\u)\in\Omega,
\end{equation*}
where $\kappa,\, p,\,\alpha$ are constants such that
\begin{equation*}
\kappa>0,\qquad 0<\alpha<1,\qquad   p\ge 2,
\end{equation*}
and 
\begin{align*}
& \Psi(\y,\mathbf 0)=0\le \Psi(\y,\u),\\&
|\nabla _\u \Psi(\y,\u)|, |D^2_{\u}\Psi(\y,\u)|\le g(\y),\quad {\mbox{ for every }}\,(\y,\,\u)\in\Omega,
\end{align*}
for some non-negative function~$g\in L^2_{\rm{loc}}(\R^N)$.
\end{Assumptions}

Assumption~\ref{ass:f.2} can be seen as a counterpart of Newton's Third Law of Motion (the Action-Reaction Law). Also, assumption~\ref{ass:f.5} states that the material is hyperelastic (the linear elastic case corresponding to~$p=2$ and~$\Psi=0$). In this regard, we will first recall the main results obtained for the dispersive propagation in the uni-dimensional case (see \cite{CDFMV}), where the Cauchy problem \eqref{eq:CP} takes the following form.
\begin{equation}
\begin{cases}
\rho\,u_{tt}(t,x)=-2\,\kappa
\displaystyle\int_{-\delta}^{\delta}\frac{u(t,x)-u(t,x-y)}{|y|^{1+2\,\alpha}}dy
=: \left(Ku(t,\cdot)\right)(x),&\quad t>0,\,x\in\R,\\[10pt]
u(0,x)=v_0(x),&\quad x\in\R,\\[5pt]
u_t(0,x)=v_1(x),&\quad x\in\R\,,
\end{cases}
\label{eq:linear_per}
\end{equation}
where $\delta$, $\kappa$ and $\rho$ are positive real constants and $0<\alpha<1$. Let us  clarify the adopted notation for  vectors: bold quantities, such as $\Xi,\,\x,\,...$ refer to the entire vector, whereas their modulus is simply denoted  by  $\xi,\,x,\,...$ 

In \cite{CDFMV}, the following results are proved.
\begin{theorem}
\label{linsol}
Let $v_0,\,v_1\in\mathcal{S}(\mathbb{R})$ and $0<\alpha <1$.
Then problem  \eqref{eq:linear_per} has the unique solution  
$u:\mathbb{R}_+\times\mathbb{R}\rightarrow \mathbb{R}$   given by   
\begin{equation}
u(t,x)
=\int_{\mathbb{R}} e^{-i\xi x}\left[\widehat{v_0}(\xi)\cos\left(\omega(\xi)\,t\right)
+\frac{\widehat{v_1}(\xi)}{\omega(\xi)}\sin\left(\omega(\xi)\,t\right)\right] d\xi\,,
\label{eq:sol}
\end{equation}
where~$\widehat{v_0}(\xi)$ and  $\widehat{v_1}(\xi)$ represent the Fourier transform\footnote{We use here the non-unitary convention that\label{FOUCO}
$$ \widehat{\bf v}(\Xi):=\frac1{(2\pi)^N}\int_{\R^N} {\bf v}({\bf x})\,e^{i\x\cdot\Xi}\,d\x.$$
In this way, the inversion formula reads
$$ {\bf v}(\x)=\int_{\R^N} \widehat {\bf v}(\Xi)\,e^{-i\Xi\cdot\x}\,d\Xi.$$}
of $v_0(x)$ and $v_1(x)$,
and~$\omega: \mathbb{R}\rightarrow \mathbb{R}^+$ is the dispersion relation defined by 
\begin{equation}
\omega(\xi)=\left(\frac{2\kappa}{\rho\,\delta^{2\alpha}}\,\int_{-1}^{1}\frac{1-\cos (\xi\delta z)}{|z|^{1+2\alpha}}dz\right)^{1/2}\,.
\label{eq:disp_rel}
\end{equation}
\end{theorem}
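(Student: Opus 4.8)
The plan is to solve the linear Cauchy problem \eqref{eq:linear_per} by the Fourier transform in the space variable, turning the nonlocal equation into a decoupled family of ordinary differential equations indexed by the frequency $\xi$, and then to invert. First I would apply the Fourier transform (in the convention of footnote~\ref{FOUCO}) to both sides of the evolution equation. The only point requiring care is the transform of the nonlocal operator $Ku$: since $K$ is a convolution-type operator, one writes
\begin{equation*}
\int_{-\delta}^{\delta}\frac{u(t,x)-u(t,x-y)}{|y|^{1+2\alpha}}\,dy
\end{equation*}
and uses the shift property $\widehat{u(t,\cdot-y)}(\xi)=e^{i\xi y}\,\widehat{u}(t,\xi)$, so that the transform of $Ku$ equals $-2\kappa\,\widehat u(t,\xi)\int_{-\delta}^{\delta}\frac{1-e^{i\xi y}}{|y|^{1+2\alpha}}\,dy$. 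Because the integration domain $(-\delta,\delta)$ is symmetric, the odd (imaginary) part $\sin(\xi y)$ integrates to zero, leaving $-2\kappa\,\widehat u(t,\xi)\int_{-\delta}^{\delta}\frac{1-\cos(\xi y)}{|y|^{1+2\alpha}}\,dy$; the rescaling $y=\delta z$ then produces exactly the factor $\rho\,\omega(\xi)^2$ with $\omega$ as in \eqref{eq:disp_rel}. One should note in passing that the integral converges at $y=0$ because $1-\cos(\xi y)=O(y^2)$ against $|y|^{-1-2\alpha}$ with $2\alpha<2$, and that $v_0,v_1\in\mathcal S(\mathbb R)$ guarantees $\widehat v_0,\widehat v_1$ are smooth and rapidly decaying, so all the manipulations and the final integral are justified.

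Next, for each fixed $\xi$ the transformed problem is the harmonic-oscillator initial value problem $\partial_{tt}\widehat u(t,\xi)=-\omega(\xi)^2\,\widehat u(t,\xi)$, $\widehat u(0,\xi)=\widehat v_0(\xi)$, $\partial_t\widehat u(0,\xi)=\widehat v_1(\xi)$, whose unique solution is
\begin{equation*}
\widehat u(t,\xi)=\widehat v_0(\xi)\cos(\omega(\xi)t)+\frac{\widehat v_1(\xi)}{\omega(\xi)}\sin(\omega(\xi)t).
\end{equation*}
Here one checks that $\omega(\xi)>0$ for $\xi\neq0$ (the integrand $1-\cos(\xi\delta z)$ is nonnegative and not identically zero), and that the apparent singularity at $\xi=0$ is removable since $\frac{\sin(\omega t)}{\omega}\to t$ as $\omega\to0$; alternatively one restricts attention to the a.e.\ statement, which suffices for the integral representation. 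Applying the inversion formula of footnote~\ref{FOUCO} then yields precisely \eqref{eq:sol}.

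Finally, uniqueness follows by running the same argument in reverse: any solution in the relevant class has a Fourier transform satisfying the ODE above, which has a unique solution for each $\xi$, so the space-time solution is unique. The main obstacle — really the only non-mechanical step — is the rigorous justification of passing the Fourier transform through the nonlocal integral operator and of differentiating under the integral sign in $t$; this is handled by dominated convergence using the Schwartz decay of $\widehat v_0,\widehat v_1$, the local integrability of $|z|^{-1-2\alpha}$ near the origin after the $1-\cos$ cancellation, and the boundedness of $\cos$ and $\sin$. Everything else is the explicit solution of a second-order constant-coefficient ODE and bookkeeping with the chosen Fourier normalization.
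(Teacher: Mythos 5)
Your proposal is correct and follows essentially the same route as the paper: the paper defers the one-dimensional proof to the cited reference, but its own proofs of the two- and three-dimensional analogues (Theorems \ref{linsol2D} and \ref{linsol3D}) proceed exactly as you do, namely by Fourier-transforming the nonlocal operator via the shift property, discarding the odd $\sin(\xi y)$ contribution by symmetry, rescaling $y=\delta z$ to identify $\rho\,\omega^2(\xi)$, solving the resulting harmonic-oscillator ODE in $t$, and inverting. Your added remarks on the integrability of $|y|^{-1-2\alpha}$ after the $1-\cos$ cancellation and on the removable singularity at $\xi=0$ are consistent with, and slightly more explicit than, the paper's treatment.
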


\begin{theorem}\label{ASYLOANDHI}
For $\delta >0$ and $0<\alpha <1$, the function $\xi \mapsto \omega(\xi)$ given by \eqref{eq:disp_rel} satisfies the following relations.

\begin{equation}
\lim_{\xi\rightarrow 0}\xi^{-2}\omega^2(\xi)= \frac{\kappa\,\delta^{2(1-\alpha)}}{(1-\alpha)\,\rho},
\label{LS:XD}
\end{equation}

\begin{equation}
\lim_{\xi\rightarrow \pm\infty}|\xi|^{-2\alpha}\omega^2(\xi)=
\frac{4\kappa
}{\rho}\,\int_{0}^\infty
\frac{1-\cos \tau}{\tau^{1+2\alpha}}d\tau.
\label{IMP}
\end{equation}
\end{theorem}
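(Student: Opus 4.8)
The plan is to establish the two limits separately, both directly from the explicit formula \eqref{eq:disp_rel} for $\omega^2(\xi)$ together with the elementary bounds $0\le 1-\cos a\le\min\{2,\,a^2/2\}$, valid for all $a\in\R$. Since the integrand in \eqref{eq:disp_rel} is even in $\xi$, we have $\omega(-\xi)=\omega(\xi)$, so for \eqref{IMP} it suffices to consider $\xi\to+\infty$.

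For the low-frequency statement \eqref{LS:XD}, I would write
\[
\xi^{-2}\omega^2(\xi)=\frac{2\kappa}{\rho\,\delta^{2\alpha}}\int_{-1}^{1}\frac{1-\cos(\xi\delta z)}{\xi^2\,|z|^{1+2\alpha}}\,dz
\]
and pass to the limit $\xi\to 0$ under the integral sign. This is legitimate by dominated convergence: using $1-\cos a\le a^2/2$, the integrand is majorized by $\tfrac{\delta^2}{2}|z|^{1-2\alpha}$, which lies in $L^1(-1,1)$ precisely because $1-2\alpha>-1$, i.e.\ $\alpha<1$; and for each fixed $z$ it converges to $\tfrac{\delta^2}{2}|z|^{1-2\alpha}$ as $\xi\to0$. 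Since $\int_{-1}^{1}|z|^{1-2\alpha}\,dz=\tfrac{1}{1-\alpha}$, this produces the constant $\tfrac{\kappa\,\delta^{2(1-\alpha)}}{(1-\alpha)\rho}$.

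For the high-frequency statement \eqref{IMP}, the key step is the (exact) change of variables $\tau=\xi\delta z$ in \eqref{eq:disp_rel} with $\xi>0$, which gives
\[
|\xi|^{-2\alpha}\omega^2(\xi)=\frac{2\kappa}{\rho}\int_{-\xi\delta}^{\xi\delta}\frac{1-\cos\tau}{|\tau|^{1+2\alpha}}\,d\tau.
\]
It then remains to observe that the improper integral $\int_{\R}\frac{1-\cos\tau}{|\tau|^{1+2\alpha}}\,d\tau$ converges: near $\tau=0$ the integrand behaves like $\tfrac12|\tau|^{1-2\alpha}$, integrable since $\alpha<1$, while for large $|\tau|$ it is bounded by $2|\tau|^{-1-2\alpha}$, integrable since $\alpha>0$. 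Letting $\xi\to+\infty$ and folding the even integrand onto $(0,\infty)$ yields $\tfrac{4\kappa}{\rho}\int_0^\infty\frac{1-\cos\tau}{\tau^{1+2\alpha}}\,d\tau$, as claimed.

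The computations involved are short. The only point requiring genuine care is the interchange of limit and integral in the low-frequency part, which is handled by the explicit majorant $\tfrac{\delta^2}{2}|z|^{1-2\alpha}$; it is also worth emphasizing that the standing hypothesis $0<\alpha<1$ is exactly what makes the relevant integrals converge at both endpoints (one needs $\alpha<1$ near the origin and $\alpha>0$ near infinity). I do not anticipate any substantive obstacle.
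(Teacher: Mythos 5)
Your proof is correct. Note first that the paper does not actually prove Theorem~\ref{ASYLOANDHI}: it is recalled from \cite{CDFMV}, and the only proofs available for comparison in this manuscript are those of the exact analogues in two and three dimensions (Theorems~\ref{displimit2D} and \ref{displimit3D}). For the high-frequency limit \eqref{IMP} your argument coincides with theirs: the same substitution $\tau=\xi\delta z$ turns $|\xi|^{-2\alpha}\omega^2(\xi)$ into a truncated integral of a fixed nonnegative integrable function, and one lets the truncation go to infinity; your added remarks on evenness in $\xi$ and on why $0<\alpha<1$ gives integrability at both endpoints are exactly the points the paper leaves implicit. For the low-frequency limit \eqref{LS:XD} you take a genuinely different route: the paper expands the Bessel function (here it would be $\cos$) in its Taylor series, integrates term by term, isolates the $m=1$ term as the leading $\xi^2$ contribution, and asserts that the tail $\sum_{m\ge 2}$ vanishes after dividing by $\xi^2$; you instead apply dominated convergence directly, with the explicit majorant $\tfrac{\delta^2}{2}|z|^{1-2\alpha}$ coming from $1-\cos a\le a^2/2$. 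Your version is shorter and more self-contained, since it avoids having to justify the interchange of sum and integral and the uniform control of the series remainder; the paper's series approach has the mild advantage of exhibiting the full small-$\xi$ expansion of $\omega^2$, not just its leading term. Both computations land on the same constant $\kappa\,\delta^{2(1-\alpha)}/\bigl((1-\alpha)\rho\bigr)$, via $\int_{-1}^{1}|z|^{1-2\alpha}\,dz=\tfrac{1}{1-\alpha}$. I see no gap.
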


This paper is devoted to the analysis of the two-dimensional and three-dimensional counterparts of the dispersion relation \eqref{eq:disp_rel}. Analogous results to the uni-dimensional case will be discussed here and the main differences concerning the dispersive behavior emerging from  the increased  dimension  of the system will be analyzed in detail.

\section{The two-dimensional case}
\label{2dim}
\begin{theorem}
\label{linsol2D}
Let ${\bf v}_0,\,{\bf v}_1\in\mathcal{S}(\R^2)$ and $0<\alpha <1$.
Then problem  \eqref{eq:CP} with $N=2$, $p=2$ and $\Psi=0$ has the unique solution  
$\u:\mathbb{R}^+\times\mathbb{R}^2\rightarrow \mathbb{R}^2$   given by
\begin{equation}
\u(t,\x)
=\int_{\R^2} e^{-i\Xi\cdot \x}\left[\widehat{{\bf v}_0}(\Xi)\cos\left(\omega(\xi)\,t\right)
+\frac{\widehat{{\bf v}_1}(\Xi)}{\omega(\xi)}\sin\left(\omega(\xi)\,t\right)\right] d\Xi\,,
\label{eq:sol2D}
\end{equation}
where~$\widehat{{\bf v}_0}(\Xi)$ and  $\widehat{{\bf v}_1}(\Xi)$ represent the Fourier transforms
of ${\bf v}_0(\x)$ and ${\bf v}_1(\x)$,
and~$\omega: \mathbb{R}^+\rightarrow \mathbb{R}^+$ is the dispersion relation defined by
\begin{equation}
\omega(\xi)=\left(\frac{4\pi\kappa}{\rho\,\delta^{2\alpha}}\,\int_0^1 \frac{1-J_0(\xi \delta z)}{z^{1+2\alpha}}dz\right)^{1/2},
\label{eq:disp_rel2D}
\end{equation}
where $J_0(\cdot)$ is the $0$-th order Bessel function of the first kind.
\end{theorem}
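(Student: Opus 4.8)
The plan is to follow the one-dimensional argument behind Theorem~\ref{linsol}: pass to Fourier variables in $\x$, where the nonlocal operator $K$ becomes multiplication by a scalar symbol, solve the resulting decoupled family of harmonic-oscillator equations, and then invert. First I would specialize assumption~\ref{ass:f.5} to $N=2$, $p=2$, $\Psi=0$, which gives $\Phi(\y,\u)=\kappa|\u|^2/|\y|^{2+2\alpha}$ and hence $\f(\y,\u)=\nabla_\u\Phi=2\kappa\,\u/|\y|^{2+2\alpha}$; the change of variables $\y=\x'-\x$ in \eqref{eq:operator} then yields
\[
(K\u(t,\cdot))(\x)=2\kappa\int_{B_\delta(\mathbf 0)}\frac{\u(t,\x+\y)-\u(t,\x)}{|\y|^{2+2\alpha}}\,d\y ,
\]
where the kernel is integrable near $\y=\mathbf 0$ precisely because $0<\alpha<1$ (for smooth $\u$ the numerator vanishes quadratically), so that $K$ is a well-defined continuous operator on $\mathcal S(\R^2)$ and $\rho\,\partial_{tt}\u=K\u$ is the momentum balance normalized as in \eqref{eq:linear_per}.

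Next I would take the Fourier transform in $\x$ with the convention of the footnote. Since $\widehat{\u(t,\cdot+\y)}(\Xi)=e^{-i\y\cdot\Xi}\,\widehat\u(t,\Xi)$ and $B_\delta(\mathbf 0)$ is invariant under $\y\mapsto-\y$, the equation turns, for each fixed $\Xi$ and componentwise (the symbol being a scalar multiple of the identity), into the ODE
\[
\rho\,\partial_{tt}\widehat\u(t,\Xi)=-m(\Xi)\,\widehat\u(t,\Xi),\qquad m(\Xi):=2\kappa\int_{B_\delta(\mathbf 0)}\frac{1-\cos(\y\cdot\Xi)}{|\y|^{2+2\alpha}}\,d\y ,
\]
with data $\widehat\u(0,\Xi)=\widehat{{\bf v}_0}(\Xi)$ and $\partial_t\widehat\u(0,\Xi)=\widehat{{\bf v}_1}(\Xi)$.

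The key step --- the genuinely two-dimensional one --- is the evaluation of $m(\Xi)$ in polar coordinates $\y=r(\cos\theta,\sin\theta)$: writing $\Xi=\xi(\cos\phi,\sin\phi)$ one has $\y\cdot\Xi=r\xi\cos(\theta-\phi)$, and the angular integration is resolved by the classical identity $\frac1{2\pi}\int_0^{2\pi}\cos(a\cos\theta)\,d\theta=J_0(a)$, which gives, after the rescaling $r=\delta z$,
\[
m(\Xi)=4\pi\kappa\int_0^\delta\frac{1-J_0(r\xi)}{r^{1+2\alpha}}\,dr=\frac{4\pi\kappa}{\delta^{2\alpha}}\int_0^1\frac{1-J_0(\xi\delta z)}{z^{1+2\alpha}}\,dz .
\]
Thus $m$ depends on $\Xi$ only through $\xi=|\Xi|$, it is non-negative since $J_0\le1$, the integral converges because $1-J_0(\xi\delta z)=O(z^2)$ near $z=0$ and $\alpha<1$, and $m(\Xi)/\rho=\omega(\xi)^2$ with $\omega$ exactly as in \eqref{eq:disp_rel2D}.

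Finally, solving the harmonic oscillator gives $\widehat\u(t,\Xi)=\widehat{{\bf v}_0}(\Xi)\cos(\omega(\xi)t)+\omega(\xi)^{-1}\widehat{{\bf v}_1}(\Xi)\sin(\omega(\xi)t)$, the apparent singularity at $\xi=0$ being removable since $\omega(\xi)^{-1}\sin(\omega(\xi)t)\to t$; Fourier inversion then produces \eqref{eq:sol2D}, and one checks by direct substitution that it solves \eqref{eq:CP}. Uniqueness is immediate, since the transform reduces the Cauchy problem to the ODE above, whose solution with prescribed data is unique, and the Schwartz regularity of ${\bf v}_0,{\bf v}_1$ legitimizes all the analytic manipulations (differentiation under the integral sign, Fubini, inversion). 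I do not anticipate a serious obstacle: the only delicate points are the integrability of the kernel at the origin, controlled by $\alpha<1$, and the reduction of the angular average to the Bessel function $J_0$, which is the two-dimensional substitute for the trivial parity argument of the one-dimensional proof.
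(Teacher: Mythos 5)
Your proposal is correct and follows essentially the same route as the paper: Fourier transform in $\x$, reduction of the symbol to a radial function via polar coordinates and the identity $\frac{1}{2\pi}\int_0^{2\pi}\cos(a\cos\theta)\,d\theta=J_0(a)$, the rescaling $r=\delta z$, and the explicit solution of the resulting harmonic-oscillator ODE followed by Fourier inversion. The only (harmless) difference is that the paper delegates existence and uniqueness to the earlier well-posedness result of \cite{Coclite_2018} and merely verifies the representation formula, whereas you argue uniqueness directly from the ODE reduction in Fourier space.
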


\begin{theorem}
\label{displimit2D}
For $\delta >0$ and $0<\alpha <1$, the following asymptotic relations hold true.
\begin{equation}
\lim_{\xi\rightarrow 0}\xi^{-2}\omega^2(\xi)= \frac{\pi\kappa\,\delta^{2(1-\alpha)}}{2(1-\alpha)\,\rho}\label{LS2D:XD}
\end{equation}
and
\begin{equation}
\lim_{\xi\rightarrow \infty}\xi^{-2\alpha}\omega^2(\xi)=
\frac{4\pi\kappa
}{\rho}\,\int_{0}^\infty
\frac{1-J_0(\tau)}{\tau^{1+2\alpha}}d\tau.
\label{IMP2D}
\end{equation}
\end{theorem}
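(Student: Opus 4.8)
The plan is to handle the two limits separately, following the template of the one-dimensional result (Theorem~\ref{ASYLOANDHI}), with the elementary inequality $0\le 1-\cos s\le s^2/2$ replaced by its Bessel analogue. The first step is to record, from the integral representation $J_0(s)=\frac1\pi\int_0^\pi\cos(s\sin\theta)\,d\theta$ together with $0\le 1-\cos(s\sin\theta)\le \tfrac12 s^2\sin^2\theta$, the estimate
\begin{equation}
0\le 1-J_0(s)\le \frac{s^2}{4},\qquad s\in\mathbb{R},
\label{eq:bessbd}
\end{equation}
and the classical expansion $J_0(s)=1-\tfrac{s^2}{4}+O(s^4)$, which gives $s^{-2}(1-J_0(s))\to\tfrac14$ as $s\to0$. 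The representation also yields $|J_0|\le1$, hence $0\le 1-J_0\le2$ on all of $\mathbb{R}$.

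For \eqref{LS2D:XD} I would rewrite
\begin{equation*}
\xi^{-2}\omega^2(\xi)=\frac{4\pi\kappa}{\rho\,\delta^{2\alpha}}\int_0^1\frac{1-J_0(\xi\delta z)}{\xi^2\,z^{1+2\alpha}}\,dz,
\end{equation*}
observe that for each fixed $z\in(0,1)$ the integrand converges to $\tfrac{\delta^2}{4}\,z^{1-2\alpha}$ as $\xi\to0$ and, by \eqref{eq:bessbd}, is dominated uniformly in $\xi$ by that same function, which lies in $L^1(0,1)$ precisely because $0<\alpha<1$ forces $1-2\alpha>-1$. Dominated convergence together with $\int_0^1 z^{1-2\alpha}\,dz=\frac1{2(1-\alpha)}$ then produces \eqref{LS2D:XD}.

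For \eqref{IMP2D} I would substitute $\tau=\xi\delta z$ in \eqref{eq:disp_rel2D}, which turns it into the exact identity
\begin{equation*}
\xi^{-2\alpha}\omega^2(\xi)=\frac{4\pi\kappa}{\rho}\int_0^{\xi\delta}\frac{1-J_0(\tau)}{\tau^{1+2\alpha}}\,d\tau,
\end{equation*}
and then check that the improper integral $\int_0^\infty\tau^{-1-2\alpha}\,(1-J_0(\tau))\,d\tau$ converges: near the origin the integrand is $O(\tau^{1-2\alpha})$ by \eqref{eq:bessbd} with $1-2\alpha>-1$, and near infinity it is bounded by $2\tau^{-1-2\alpha}$ with $1+2\alpha>1$. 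Letting $\xi\to\infty$, so that $\xi\delta\to\infty$, then gives \eqref{IMP2D}.

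I do not expect a genuine obstacle here: once \eqref{eq:bessbd} is available, the low-frequency case is a routine dominated-convergence argument and the high-frequency case reduces to the rescaling identity plus the convergence check for the limiting integral. The one place that deserves mild care is the uniform-in-$\xi$ domination that legitimizes passing to the limit under the integral sign in the low-frequency regime, and this is exactly what \eqref{eq:bessbd} supplies.
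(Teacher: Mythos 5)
Your argument is correct, and both limits come out with the right constants. For the high-frequency limit \eqref{IMP2D} you follow exactly the paper's route: the substitution $\tau=\xi\delta z$ in \eqref{eq:disp_rel2D} turns $\xi^{-2\alpha}\omega^2(\xi)$ into $\frac{4\pi\kappa}{\rho}\int_0^{\xi\delta}\tau^{-1-2\alpha}\left(1-J_0(\tau)\right)d\tau$, and the only addition you make is the explicit verification that the limiting improper integral is finite (integrand $O(\tau^{1-2\alpha})$ at the origin, $O(\tau^{-1-2\alpha})$ at infinity), a check the paper leaves implicit. For the low-frequency limit \eqref{LS2D:XD} your route is genuinely different: the paper expands $J_0$ in its Maclaurin series, integrates term by term against $z^{-1-2\alpha}$, isolates the $m=1$ term to produce $\xi^2\frac{\pi\kappa\delta^{2(1-\alpha)}}{2(1-\alpha)\rho}$, and argues that the tail $\sum_{m\ge2}$ is $o(\xi^2)$; you instead use the Poisson integral representation of $J_0$ to get the two-sided bound $0\le 1-J_0(s)\le s^2/4$ together with $s^{-2}\left(1-J_0(s)\right)\to\frac14$, and conclude by dominated convergence with the explicit dominating function $\frac{\delta^2}{4}z^{1-2\alpha}\in L^1(0,1)$. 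Your version buys a cleaner justification of the passage to the limit under the integral sign (the paper's term-by-term integration and the subsequent limit of the infinite tail are asserted rather than justified, though they can be repaired since the series converges uniformly on bounded sets); the paper's version buys slightly more information, namely the full asymptotic expansion of $\omega^2$ in powers of $\xi^2$ near the origin rather than just the leading term. Both are valid; there is no gap in your proposal.
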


\begin{corollary}[Radial solutions]\label{cor:isotropic2D}
Let ${\bf v}_0,\,{\bf v}_1\in\mathcal{S}(\R^2)$ and $0<\alpha <1$.
Assume ${\bf v}_0$, ${\bf v}_1$ be such that
\begin{equation}
{\bf v}_0(\x)={\bf v_0}\left(\sqrt{x^2_1+x^2_2}\right)
\qquad\text{and}\qquad
{\bf v}_1(\x)={\bf v}_1\left(\sqrt{x^2_1+x^2_2}\right)\,.
\end{equation}
Then, the unique solution of the Cauchy problem \eqref{eq:CP} is
\begin{equation}
\u(t,x)
=2\pi\int_0^\infty \xi\,J_0(\xi x)\left[\widehat{{\bf v}_0}(\xi)\cos\left(\omega(\xi)\,t\right)
+\frac{\widehat{{\bf v}_1}(\xi)}{\omega(\xi)}\sin\left(\omega(\xi)\,t\right)\right] d\xi\,.
\label{eq:isosol2D}
\end{equation}
\end{corollary}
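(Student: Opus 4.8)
The plan is to obtain \eqref{eq:isosol2D} directly from the general representation \eqref{eq:sol2D} by exploiting the rotational symmetry of the data, using the classical fact that the two-dimensional Fourier transform of a radial function reduces to a Hankel transform of order zero. The only analytic ingredient is the elementary identity
\begin{equation*}
\int_0^{2\pi} e^{\pm i z\cos\theta}\,d\theta = 2\pi\, J_0(z),\qquad z\in\R,
\end{equation*}
coming from the standard integral representation of $J_0$. First I would record that the hypotheses force $\widehat{{\bf v}_0}$ and $\widehat{{\bf v}_1}$ to be radial: writing $\Xi$ in polar coordinates and using that ${\bf v}_j(\x)$ depends only on $x=|\x|$, the defining integral (see the footnote on the Fourier convention, \ref{FOUCO}) becomes
\begin{equation*}
\widehat{{\bf v}_j}(\Xi)=\frac{1}{(2\pi)^2}\int_0^\infty {\bf v}_j(r)\,r\left(\int_0^{2\pi}e^{i r\xi\cos\theta}\,d\theta\right)dr=\frac{1}{2\pi}\int_0^\infty {\bf v}_j(r)\,J_0(\xi r)\,r\,dr,
\end{equation*}
so $\widehat{{\bf v}_j}(\Xi)$ depends on $\Xi$ only through $\xi=|\Xi|$. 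Since ${\bf v}_j\in\mathcal S(\R^2)$, the transforms $\widehat{{\bf v}_j}$ are themselves Schwartz functions, so every integral appearing below converges absolutely and the use of Fubini's theorem to separate radial and angular integrations is legitimate. The argument is carried out componentwise, each scalar component of ${\bf v}_j$ being a radial Schwartz function.

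Next I would insert this into \eqref{eq:sol2D}. Because $\omega$ in \eqref{eq:disp_rel2D} is a function of $\xi=|\Xi|$ alone and $\widehat{{\bf v}_0},\widehat{{\bf v}_1}$ are radial, the bracket
\begin{equation*}
\widehat{{\bf v}_0}(\Xi)\cos\big(\omega(\xi)t\big)+\frac{\widehat{{\bf v}_1}(\Xi)}{\omega(\xi)}\sin\big(\omega(\xi)t\big)
\end{equation*}
is a radial function of $\Xi$; note that $\omega(\xi)>0$ for $\xi>0$ (since $1-J_0(\xi\delta z)\ge 0$ and is positive for a.e.\ $z$), while the quotient $\sin(\omega(\xi)t)/\omega(\xi)$ is extended by the value $t$ at $\xi=0$, exactly as in the one-dimensional case, which removes the apparent singularity. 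Passing to polar coordinates in the inversion integral \eqref{eq:sol2D} and invoking the Bessel identity once more gives
\begin{equation*}
\u(t,\x)=\int_0^\infty\!\!\xi\left(\int_0^{2\pi}e^{-i\xi x\cos\theta}\,d\theta\right)\!\left[\widehat{{\bf v}_0}(\xi)\cos\big(\omega(\xi)t\big)+\frac{\widehat{{\bf v}_1}(\xi)}{\omega(\xi)}\sin\big(\omega(\xi)t\big)\right]d\xi,
\end{equation*}
which, evaluating the angular integral, is precisely \eqref{eq:isosol2D}; in particular $\u(t,\cdot)$ is again radial, and its uniqueness is inherited from Theorem~\ref{linsol2D}.

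There is no genuine obstacle here, since this is a corollary of Theorem~\ref{linsol2D} and the proof is essentially bookkeeping. The only points requiring a little care are tracking the normalization constants produced by the non-unitary Fourier convention of footnote~\ref{FOUCO} and justifying the interchange of the radial and angular integrations (together with the extension of $\sin(\omega t)/\omega$ across $\xi=0$), all of which are handled by the Schwartz regularity established in the first step.
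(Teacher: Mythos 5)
Your argument is correct and is essentially identical to the paper's own proof: both reduce the two-dimensional Fourier transform of the radial data to a Hankel transform via $\int_0^{2\pi}e^{\pm iz\cos\theta}\,d\theta=2\pi J_0(z)$, conclude that $\widehat{{\bf v}_0},\widehat{{\bf v}_1}$ depend only on $\xi$, and then pass to polar coordinates in the inversion integral of \eqref{eq:sol2D} to produce the factor $2\pi\,\xi\,J_0(\xi x)$. As a minor point in your favour, your normalization $\frac{1}{2\pi}\int_0^\infty {\bf v}_j(r)J_0(\xi r)\,r\,dr$ is the correct one under the paper's Fourier convention, whereas the paper's displayed intermediate formula retains a factor $\frac{1}{(2\pi)^2}$ that should be $\frac{1}{2\pi}$; this does not affect the statement being proved.
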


\begin{remark}\label{rem:12D}
{\rm Notice that the relation \eqref{eq:disp_rel2D}  only depends  on the modulus $\xi$ of the Fourier variable $\Xi$ and 
the solution \eqref{eq:isosol2D} is radial since it only depends  on the modulus $x$ of the vector $\x$.}
\end{remark}
As a consequence of Remark~\ref{rem:12D}, the only way to have spatial anisotropic effects in the dynamic evolution is to deal with anisotropic initial conditions. We now proceed in proving  of the aforementioned theorems.

\begin{proof}[Proof of Theorem \ref{linsol2D}]
The existence and uniqueness of the solution for the Cauchy problem \eqref{eq:CP} has been proven in \cite{Coclite_2018}. To prove that this solution is given by Eq.~\eqref{eq:sol2D}, we proceed by applying the Fourier transform to \eqref{eq:CP}. Indeed, we first have that
\begin{equation}
\partial_{tt}\u(t,\x)=\int_{\R^2} e^{-i\Xi\cdot \x}\partial_{tt}\widehat{\bf u}(t,\Xi) d\Xi\,.
\label{eq:td2}
\end{equation}
In the same way,
\begin{align}
(K{\bf u}&(t,\cdot))({\bf x})\equiv\\ \equiv&-2\kappa\int_{B_\delta(0)}
\frac{{\bf u}(t,{\bf x})-{\bf u}(t,{\bf x}-{\bf y})}{|{\bf y}|^{2+2\alpha}}d{\bf y}
\nonumber\\
=&-2\kappa\int_{\R^2} {\bf \widehat u}(t,\Xi)\,e^{-i \Xi\cdot{\bf x}}
\int_{B_\delta(0)}\frac{1-e^{i\Xi\cdot{\bf y}}}{|{\bf y}|^{2+2\alpha}}d{\bf y}d\Xi
\nonumber\\
=&-2\kappa\int_{\R^2} {\bf \widehat u}(t,\Xi)\,e^{-i \Xi\cdot{\bf x}}
\int_0^\delta \int_0^{2\pi} \frac{1-e^{i \xi y\cos\left(\theta-\theta_\xi\right)}}{y^{1+2\alpha}}d\theta d y d\Xi
\nonumber\\
=&-2\kappa\int_{\R^2} {\bf \widehat u}(t,\Xi)\,e^{-i \Xi\cdot{\bf x}}
\int_0^\delta \left[\int_0^{2\pi} \frac{1-\cos(\xi y\cos\left(\theta-\theta_\xi\right))}{y^{1+2\alpha}}d\theta
-i\underbrace{\int_0^{2\pi} \frac{\sin(\xi y\cos\left(\theta-\theta_\xi\right))}{y^{1+2\alpha}}d\theta}_{=0}\right] d y d\Xi
\nonumber\\
=&-4\pi\kappa\int_{\R^2} {\bf \widehat u}(t,\Xi)\,e^{-i \Xi\cdot{\bf x}}
\int_0^\delta \frac{1-J_0(\xi y)}{y^{1+2\alpha}}d yd\Xi
\nonumber\\
=&-4\pi\kappa\delta^{-2\alpha}\int_{\R^2} {\bf \widehat u}(t,\Xi)\,e^{-i \Xi\cdot{\bf x}}
\int_0^1 \frac{1-J_0(\xi \delta z)}{z^{1+2\alpha}}dzd\Xi
\nonumber\\
=&-\rho\int_{\R^2} {\bf \widehat u}(t,\Xi)\,e^{-i \Xi\cdot{\bf x}}
\omega^2(\xi)d\Xi\,.
\label{eq:K2}
\end{align}
In the third line, we expressed the integral over $B_\delta(0)$ in polar coordinates, namely $\Xi=\xi\left( \cos\theta_\xi,\sin\theta_\xi \right)$ and $\y=y\left( \cos\theta,\sin\theta \right)$ with $\xi,y\in \R^+$ and $\theta,\theta_\xi\in[0,2\pi)$, and we have defined $z:=y/\delta$  in the second last line. Moreover, the under-braced integral is null thanks to the symmetry of the integrand under the shift $\theta\rightarrow\theta+\theta_\xi$ and the invariance of the integral for any integration over an entire period of the integrand.

Now, we make use of Eqs.~\eqref{eq:td2} and \eqref{eq:K2} to formulate our two-dimensional Cauchy problem in Fourier space. We obtain
\begin{equation}
\begin{cases}
\partial_{tt}{\bf \widehat u}(t,\Xi)+\omega^2(\xi){\bf \widehat u}(t,\Xi)=0,&\quad \Xi\in \R^2,\: t >0,\\
\widehat{\u} (0,\Xi)=\widehat{\u_0}(\Xi),\:\partial_t \widehat{\u}(0,\Xi)=\widehat{\mathbf{v}}_0(\Xi),&\quad \Xi\in\R^2,
\end{cases}
\end{equation}
which is solved by
\begin{equation}
\widehat{\u}(t,\Xi)=\widehat{{\bf v}_0}(\Xi)\cos(\omega(\xi)t)+\frac{\widehat{{\bf v}_1}(\Xi)}{\omega(\xi)}\sin(\omega(\xi)t)\,,
\end{equation}
hence, the claim  of theorem follows.
\end{proof}

\begin{proof}[Proof of Theorem \ref{displimit2D}]
To prove Eq.~\eqref{LS2D:XD}, let us first recall that the Bessel function of first kind is given by
\begin{equation}
J_0(x)=\sum_{m=0}^\infty\frac{(-1)^m}{m!\,\Gamma(m+1)}\left(\frac{x}{2}\right)^{2m}\,,
\end{equation}
where $\Gamma(x)$ is Euler Gamma function. Hence, we have that the dispersion relation becomes
\begin{align}
\omega^2(\xi)=&-\frac{4\pi\kappa}{\rho\,\delta^{2\alpha}}\,\int_0^1 \frac{1}{z^{1+2\alpha}}\sum_{m=1}^\infty\frac{(-1)^m(\xi \delta)^{2m}}{m!\,\Gamma(m+1)}\left(\frac{z}{2}\right)^{2m}dz
\nonumber\\
=&\xi^2\frac{\pi\kappa \delta^{2(1-\alpha)}}{\rho}\,\int_0^1 z^{1-2\alpha}dz-\frac{4\pi\kappa}{\rho\,\delta^{2\alpha}}\,\int_0^1 \frac{1}{z^{1+2\alpha}}\sum_{m=2}^\infty\frac{(-1)^m(\xi \delta)^{2m}}{m!\,\Gamma(m+1)}\left(\frac{z}{2}\right)^{2m}dz
\nonumber\\
=&\xi^2\frac{\pi\kappa \delta^{2(1-\alpha)}}{2(1-\alpha)\rho}-\frac{4\pi\kappa}{\rho\,\delta^{2\alpha}}\,\sum_{m=2}^\infty\frac{(-1)^m\xi^{2m} \delta^{2m}}{m!\,\Gamma(m+1)}\int_0^1 \frac{z^{2m-1-2\alpha}}{2^{2m}}dz\,.
\end{align}
Therefore,
\begin{align}
\lim_{\xi\rightarrow 0}\xi^{-2}\omega^2(\xi)
=&\frac{\pi\kappa \delta^{2(1-\alpha)}}{2(1-\alpha)\rho}
-\lim_{\xi\rightarrow 0}\frac{4\pi\kappa}{\rho\,\delta^{2\alpha}}\,\sum_{m=2}^\infty\frac{(-1)^m\xi^{2m-2} \delta^{2m}}{m!\,\Gamma(m+1)}\int_0^1 \frac{z^{2m-1-2\alpha}}{2^{2m}}dz\,
\nonumber\\
=&\frac{\pi\kappa \delta^{2(1-\alpha)}}{2(1-\alpha)\rho}\,,
\end{align}
which proves Eq.~\eqref{LS2D:XD}.

To prove Eq.~ \eqref{IMP2D},  firstly we  define $\tau:=\xi\delta z$ such that the dispersive relation becomes
\begin{equation}
\omega^2(\xi)=\frac{4\pi\kappa}{\rho}\xi^{2\alpha}\,\int_0^{\xi \delta} \frac{1-J_0(\tau)}{\tau^{1+2\alpha}}d\tau\,.
\end{equation}
The desired limit then is
\begin{equation}
\lim_{\xi\rightarrow\infty}\xi^{-2\alpha}\omega^2(\xi)=\frac{4\pi\kappa}{\rho}\,\int_0^\infty \frac{1-J_0(\tau)}{\tau^{1+2\alpha}}d\tau\,,
\end{equation}
eventually proving the theorem.
\end{proof}

\begin{proof}[Proof of Corollary \ref{cor:isotropic2D}]
Firstly, we evaluate the Fourier transform of ${\bf v}_0(\x)$
\begin{align}
\widehat{{\bf v}_0}(\Xi)=&\frac{1}{(2\pi)^2}\int_{\R^2}{\bf v}_0\left(\sqrt{x^2_1+x^2_2}\right)e^{i\x\cdot\Xi}d\x
\nonumber\\
=&\frac{1}{(2\pi)^2}\int_0^\infty\int_0^{2\pi}x\,{\bf v}_0\left(x\right)e^{ix\xi\cos\left(\theta-\theta_\xi\right)}d\theta dx
\nonumber\\
=&\frac{1}{(2\pi)^2}\int_0^\infty x\,{\bf v}_0\left(x\right)J_0(\xi x) dx\,,
\end{align}
where we have adopted polar coordinates $\Xi=\xi\left( \cos\theta_\xi,\sin\theta_\xi \right)$ and $\x=x\left( \cos\theta,\sin\theta \right)$. We obtained then that $\widehat{{\bf v}_0}(\Xi)$
is solely function of $\xi$ rather than $\Xi$. The same occurs for $\widehat{{\bf v}_1}(\Xi)$.
Hence, thanks to the Theorem \ref{linsol2D}, the unique solution of the Cauchy problem \eqref{eq:CP} is
\begin{align}
\u(t,\x)
=&\int_{\R^2} e^{-i\Xi\cdot \x}\left[\widehat{{\bf v}_0}\left(\xi\right)\cos\left(\omega(\xi)\,t\right)
+\frac{\widehat{{\bf v}_1}\left(\xi\right)}{\omega(\xi)}\sin\left(\omega(\xi)\,t\right)\right] d\Xi
\nonumber\\
=&\int_0^\infty \int_0^{2\pi}\xi e^{-i\xi x\cos\left( \theta-\theta_\xi \right)}\left[\widehat{{\bf v}_0}\left(\xi\right)\cos\left(\omega(\xi)\,t\right)
+\frac{\widehat{{\bf v}_1}\left(\xi\right)}{\omega(\xi)}\sin\left(\omega(\xi)\,t\right)\right] d\theta_\xi d\xi
\nonumber\\
=&2\pi\int_0^\infty \xi J_0(\xi x)\left[\widehat{{\bf v}_0}\left(\xi\right)\cos\left(\omega(\xi)\,t\right)
+\frac{\widehat{{\bf v}_1}\left(\xi\right)}{\omega(\xi)}\sin\left(\omega(\xi)\,t\right)\right] d\xi\,,
\end{align}
which indeed only depends on the scalar variable $x$.
\end{proof}

\section{The three-dimensional case}
\label{3dim}
\begin{theorem}
\label{linsol3D}
Let ${\bf v}_0,\,{\bf v}_1\in\mathcal{S}(\R^3)$ and $0<\alpha <1$.
Then problem  \eqref{eq:CP} with $N=3$, $p=2$ and $\Psi=0$ has the unique solution  
$\u:\mathbb{R}^+\times\mathbb{R}^3\rightarrow \mathbb{R}^3$   given by
\begin{equation}
\u(t,\x)
=\int_{\R^3} e^{-i\Xi\cdot \x}\left[\widehat{{\bf v}_0}(\Xi)\cos\left(\omega(\xi)\,t\right)
+\frac{\widehat{{\bf v}_1}(\Xi)}{\omega(\xi)}\sin\left(\omega(\xi)\,t\right)\right] d\Xi\,,
\label{eq:sol3D}
\end{equation}
where~$\widehat{{\bf v}_0}(\Xi)$ and  $\widehat{{\bf v}_1}(\Xi)$ represent the Fourier transforms
of ${\bf v}_0(\x)$ and ${\bf v}_1(\x)$,
and~$\omega: \mathbb{R}^+\rightarrow \mathbb{R}^+$ is the dispersion relation defined by
\begin{equation}
\omega(\xi)=\left(\frac{8\pi\kappa}{\rho\,\delta^{2\alpha}}\,\int_0^1 \frac{1-j_0(\xi \delta z)}{z^{1+2\alpha}}dz\right)^{1/2},
\label{eq:disp_rel3D}
\end{equation}
where $j_0(x):=\frac{\sin x}{x}$ is the $0$-th order spherical Bessel function of the first kind.
\end{theorem}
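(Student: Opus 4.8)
The plan is to mirror the proof of Theorem~\ref{linsol2D}, replacing the two–dimensional angular integration by an integration over the unit sphere $S^2$. Existence and uniqueness of the solution of \eqref{eq:CP} with $N=3$, $p=2$, $\Psi=0$ is already guaranteed by \cite{Coclite_2018}, so the only task is to identify that solution with the representation formula \eqref{eq:sol3D}. With these parameters, assumption~\ref{ass:f.5} gives $\f(\y,\u)=2\kappa\,\u/|\y|^{3+2\alpha}$, whence $(K\u(t,\cdot))(\x)=-2\kappa\int_{B_\delta(0)}\big(\u(t,\x)-\u(t,\x-\y)\big)/|\y|^{3+2\alpha}\,d\y$ after the change of variables $\y=\x-\x'$. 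Expanding $\u$ via the Fourier inversion formula and using Fubini, the symbol of $K$ on the mode $e^{-i\Xi\cdot\x}$ is $-2\kappa\int_{B_\delta(0)}\big(1-e^{i\Xi\cdot\y}\big)/|\y|^{3+2\alpha}\,d\y$.

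Next I would compute this symbol in spherical coordinates. Fixing $\Xi$ and aligning the polar axis with $\Xi$, one has $\Xi\cdot\y=\xi\,y\cos\phi$ with $y\in(0,\delta)$, $\phi\in[0,\pi]$, $\theta\in[0,2\pi)$ and volume element $y^{2}\sin\phi\,dy\,d\phi\,d\theta$; since $|\y|^{3+2\alpha}=y^{3+2\alpha}$, the radial weight reduces to $y^{-1-2\alpha}$, exactly as in the planar case. The sine part of $1-e^{i\Xi\cdot\y}$ integrates to zero under $\phi\mapsto\pi-\phi$, the constant term contributes $|S^2|=4\pi$, and the cosine part gives $\int_0^{2\pi}\!\!\int_0^\pi e^{i\xi y\cos\phi}\sin\phi\,d\phi\,d\theta=2\pi\int_{-1}^{1}e^{i\xi y s}\,ds=4\pi\,\tfrac{\sin(\xi y)}{\xi y}=4\pi\,j_0(\xi y)$. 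Hence the symbol equals $-8\pi\kappa\int_0^\delta\big(1-j_0(\xi y)\big)/y^{1+2\alpha}\,dy$, and the rescaling $z=y/\delta$ turns it into $-8\pi\kappa\,\delta^{-2\alpha}\int_0^1\big(1-j_0(\xi\delta z)\big)/z^{1+2\alpha}\,dz=-\rho\,\omega^2(\xi)$, with $\omega$ as in \eqref{eq:disp_rel3D}.

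With the symbol in hand, the Fourier–transformed Cauchy problem reads $\ptt\widehat{\u}(t,\Xi)+\omega^2(\xi)\widehat{\u}(t,\Xi)=0$ with $\widehat{\u}(0,\cdot)=\widehat{\bf v}_0$ and $\pt\widehat{\u}(0,\cdot)=\widehat{\bf v}_1$. Solving this elementary linear ODE pointwise in $\Xi$ yields $\widehat{\u}(t,\Xi)=\widehat{\bf v}_0(\Xi)\cos(\omega(\xi)t)+\widehat{\bf v}_1(\Xi)\,\omega(\xi)^{-1}\sin(\omega(\xi)t)$, and the inversion formula produces \eqref{eq:sol3D}.

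As in the scalar and two–dimensional cases, the main point requiring care is the legitimacy of these manipulations: the singular integral defining the symbol converges absolutely near $y=0$ because $1-j_0(\xi y)=\tfrac16(\xi y)^2+O(y^4)$, so the integrand is $O(y^{1-2\alpha})$, which is integrable for $0<\alpha<1$; the Fubini interchange and differentiation under the integral sign are justified by the Schwartz regularity of ${\bf v}_0,{\bf v}_1$ together with the local integrability of the kernel; and $\omega(\xi)>0$ for $\xi\neq0$, with the removable singularity at $\xi=0$ handled through $\sin(\omega t)/\omega\to t$, so the quotient in \eqref{eq:sol3D} is well defined. Beyond the replacement of $J_0$ by the spherical Bessel function $j_0$ dictated by the dimension of the sphere, no new obstacle appears relative to Theorem~\ref{linsol2D}.
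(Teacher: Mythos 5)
Your proposal is correct and follows essentially the same route as the paper's proof: Fourier transform of the Cauchy problem, computation of the symbol of $K$ in spherical coordinates with the polar axis aligned to $\Xi$, cancellation of the imaginary part by symmetry, emergence of $j_0(\xi y)=\sin(\xi y)/(\xi y)$ from the angular integral, the rescaling $z=y/\delta$, and the reduction to a harmonic-oscillator ODE in Fourier space. Your added remarks on the absolute convergence of the singular integral near $y=0$ and on the removable singularity at $\xi=0$ are sound and slightly more careful than the paper's exposition, but they do not change the argument.
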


\begin{theorem}
\label{displimit3D}
For $\delta >0$ and $0<\alpha <1$, the following asymptotic relations hold true.
\begin{equation}
\lim_{\xi\rightarrow 0}\xi^{-2}\omega^2(\xi)= \frac{2\pi\kappa\,\delta^{2(1-\alpha)}}{3(1-\alpha)\,\rho}\label{LS3D:XD}
\end{equation}
and
\begin{equation}
\lim_{\xi\rightarrow \infty}\xi^{-2\alpha}\omega^2(\xi)=
\frac{8\pi\kappa
}{\rho}\,\int_{0}^\infty
\frac{1-j_0(\tau)}{\tau^{1+2\alpha}}d\tau.
\label{IMP3D}
\end{equation}
\end{theorem}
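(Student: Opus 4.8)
The plan is to follow the template of the proof of Theorem~\ref{displimit2D}, with the spherical Bessel function $j_0(x)=\sin x/x$ in place of $J_0$, keeping track of the two places where the hypothesis $0<\alpha<1$ is actually used.

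For the low-frequency relation \eqref{LS3D:XD}, I would first record the (everywhere convergent) power series
\begin{equation}
j_0(x)=\sum_{m=0}^\infty\frac{(-1)^m}{(2m+1)!}\,x^{2m},\qquad\text{so that}\qquad 1-j_0(x)=\frac{x^2}{6}+\sum_{m\ge 2}\frac{(-1)^{m+1}}{(2m+1)!}\,x^{2m}.
\end{equation}
Inserting this into \eqref{eq:disp_rel3D} and integrating term by term over $z\in[0,1]$ gives
\begin{equation}
\omega^2(\xi)=\frac{8\pi\kappa}{\rho\,\delta^{2\alpha}}\left[\frac{(\xi\delta)^2}{6}\int_0^1 z^{1-2\alpha}\,dz+\sum_{m\ge 2}\frac{(-1)^{m+1}(\xi\delta)^{2m}}{(2m+1)!}\int_0^1 z^{2m-1-2\alpha}\,dz\right].
\end{equation}
The first integral equals $\tfrac{1}{2(1-\alpha)}$ (finite precisely because $\alpha<1$), which reproduces $\frac{2\pi\kappa}{3(1-\alpha)\rho}\,\xi^2\delta^{2(1-\alpha)}$, while every term of the sum carries a factor $\xi^{2m}$ with $m\ge 2$ and is therefore $o(\xi^2)$ as $\xi\to0$. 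Dividing by $\xi^2$ and letting $\xi\to0$ yields \eqref{LS3D:XD}. The term-by-term integration is legitimate since the series for $1-j_0(\xi\delta z)$ converges uniformly in $z\in[0,1]$ for each fixed $\xi$ and each $z^{2m-1-2\alpha}$ is integrable for $m\ge 1$; equivalently, one invokes dominated convergence together with the elementary bound $\bigl|1-j_0(\xi\delta z)-\tfrac16(\xi\delta z)^2\bigr|\le C(\xi\delta z)^4$ on $[0,1]$.

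For the high-frequency relation \eqref{IMP3D}, I would substitute $\tau:=\xi\delta z$ in \eqref{eq:disp_rel3D}, which turns the integral into $(\xi\delta)^{2\alpha}\int_0^{\xi\delta}\frac{1-j_0(\tau)}{\tau^{1+2\alpha}}\,d\tau$ and hence gives
\begin{equation}
\omega^2(\xi)=\frac{8\pi\kappa}{\rho}\,\xi^{2\alpha}\int_0^{\xi\delta}\frac{1-j_0(\tau)}{\tau^{1+2\alpha}}\,d\tau.
\end{equation}
It then suffices to verify that the improper integral $\int_0^\infty\frac{1-j_0(\tau)}{\tau^{1+2\alpha}}\,d\tau$ converges: near $\tau=0$ one has $1-j_0(\tau)=\tfrac16\tau^2+O(\tau^4)$, so the integrand is $\sim\tfrac16\tau^{1-2\alpha}$, integrable because $1-2\alpha>-1$, i.e.\ $\alpha<1$; as $\tau\to\infty$, $|1-j_0(\tau)|\le 1+|\tau|^{-1}\le 2$, so the integrand is $O(\tau^{-1-2\alpha})$, integrable because $1+2\alpha>1$, i.e.\ $\alpha>0$. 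Multiplying by $\xi^{-2\alpha}$ and letting $\xi\to\infty$ gives \eqref{IMP3D}.

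I do not expect a genuine obstacle: $j_0$ is entire and decays at infinity just like $J_0$, so the structure of the two-dimensional argument carries over, the only new points being the two endpoint integrability checks above (which pin down exactly why $0<\alpha<1$ is imposed) and the bookkeeping of the constants — the prefactor $8\pi$ and the leading Taylor coefficient $\tfrac16$ of $1-j_0$ combine to give $\tfrac{8\pi}{6}\cdot\tfrac{1}{2(1-\alpha)}=\tfrac{2\pi}{3(1-\alpha)}$, in agreement with \eqref{LS3D:XD}.
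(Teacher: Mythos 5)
Your proposal is correct and follows essentially the same route as the paper's proof: power-series expansion of $j_0$ with term-by-term integration for the $\xi\to 0$ limit, and the substitution $\tau=\xi\delta z$ for the $\xi\to\infty$ limit, with identical constants. The only difference is that you explicitly justify the interchange of sum and integral and verify the convergence of $\int_0^\infty(1-j_0(\tau))\tau^{-1-2\alpha}\,d\tau$ at both endpoints, details the paper leaves implicit.
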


\begin{corollary}[Radial solution]\label{cor:isotropic3D}
Let ${\bf v}_0,\,{\bf v}_1\in\mathcal{S}(\R^3)$ and $0<\alpha <1$.
 Assume ${\bf v}_0$, ${\bf v}_1$ be such that

\begin{equation}
{\bf v}_0(\x)={\bf v}_0\left(\sqrt{x^2_1+x^2_2+x^2_3}\right)
\qquad\text{and}\qquad
{\bf v}_1(\x)={\bf v}_1\left(\sqrt{x^2_1+x^2_2+x^2_3}\right)\,.
\end{equation}
Then, the unique solution of the Cauchy problem \eqref{eq:CP} is
\begin{equation}
\u(t,x)
=4\pi\int_0^\infty \xi^2\,j_0(\xi x)\left[\widehat{{\bf v}_0}(\xi)\cos\left(\omega(\xi)\,t\right)
+\frac{\widehat{{\bf v}_1}(\xi)}{\omega(\xi)}\sin\left(\omega(\xi)\,t\right)\right] d\xi\,.
\label{eq:isosol3D}
\end{equation}
\end{corollary}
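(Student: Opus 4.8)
The plan is to follow exactly the template set by the proof of Corollary~\ref{cor:isotropic2D}, replacing two-dimensional polar coordinates by three-dimensional spherical coordinates. The key point is that the Fourier transform of a radial function in $\R^3$ is again radial, and the relevant angular integral produces the spherical Bessel function $j_0$ rather than $J_0$.

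First I would compute the Fourier transform of the radial datum ${\bf v}_0$. Writing $\Xi=\xi\,\bm{\omega}_\xi$ and $\x=x\,\bm{\omega}_x$ with $\bm{\omega}_\xi,\bm{\omega}_x\in S^2$, and choosing spherical coordinates on $S^2$ with polar axis along $\bm{\omega}_\xi$, one has $\x\cdot\Xi=x\xi\cos\vartheta$, so that
\begin{equation*}
\widehat{{\bf v}_0}(\Xi)=\frac{1}{(2\pi)^3}\int_0^\infty x^2\,{\bf v}_0(x)\left(\int_0^{2\pi}\!\!\int_0^\pi e^{ix\xi\cos\vartheta}\sin\vartheta\,d\vartheta\,d\phi\right)dx .
\end{equation*}
The inner angular integral equals $2\pi\cdot\frac{2\sin(x\xi)}{x\xi}=4\pi\,j_0(\xi x)$, so $\widehat{{\bf v}_0}$ depends only on $\xi$, and likewise for $\widehat{{\bf v}_1}$. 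This is the three-dimensional analogue of the second display in the proof of Corollary~\ref{cor:isotropic2D}, and it uses only the elementary identity $\int_0^\pi e^{i a\cos\vartheta}\sin\vartheta\,d\vartheta = 2\,j_0(a)$.

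Next I would insert these radial Fourier transforms into the solution formula \eqref{eq:sol3D} of Theorem~\ref{linsol3D}. Since $\omega(\xi)$ already depends only on $\xi$ by \eqref{eq:disp_rel3D}, the bracketed factor $\widehat{{\bf v}_0}(\xi)\cos(\omega(\xi)t)+\widehat{{\bf v}_1}(\xi)\omega(\xi)^{-1}\sin(\omega(\xi)t)$ is a function of $\xi$ alone. Passing to spherical coordinates in the $\Xi$-integral and performing the angular integration exactly as above — now against the phase $e^{-i\Xi\cdot\x}$, which by the same computation yields another factor $4\pi\,j_0(\xi x)$ — collapses the integral over $\R^3$ to the radial integral $4\pi\int_0^\infty \xi^2 j_0(\xi x)[\cdots]\,d\xi$, which is precisely \eqref{eq:isosol3D}. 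The resulting expression manifestly depends only on $x=|\x|$, confirming that the solution is radial.

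The only mild obstacle is bookkeeping: one must make sure the same angular integral is invoked consistently (once for the Fourier transform of the data, once for the inverse transform in the solution formula), and that the constant $4\pi$ rather than $2\pi$ appears because $S^2$ contributes an extra azimuthal factor compared with $S^1$. There is no analytic subtlety — the Schwartz hypothesis guarantees absolute convergence of every integral and legitimizes Fubini throughout, and existence and uniqueness are already furnished by Theorem~\ref{linsol3D} (itself relying on \cite{Coclite_2018}). Hence the corollary follows immediately once the angular integrations are carried out.
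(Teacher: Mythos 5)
Your proposal is correct and follows essentially the same route as the paper: compute the Fourier transform of the radial data in spherical coordinates via the angular identity $\int_{-1}^{1}e^{ia\nu}\,d\nu=2\,j_0(a)$, conclude that $\widehat{{\bf v}_0},\widehat{{\bf v}_1}$ depend only on $\xi$, and then collapse the $\Xi$-integral in \eqref{eq:sol3D} by the same angular integration. If anything, you are slightly more explicit than the paper, which carries out only the first angular integration and leaves the final reduction of \eqref{eq:sol3D} to \eqref{eq:isosol3D} implicit by analogy with Corollary~\ref{cor:isotropic2D}.
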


\begin{remark}\label{rem:13D}
{\rm Analogously to the two-dimensional case, the dispersive relation \eqref{eq:disp_rel3D} only depends  on the modulus $\xi$ of the Fourier vector $\Xi$. Moreover, the
solution \eqref{eq:isosol3D} is radial on the sphere since it only depends  on the modulus $x$ of the vector $\x$.}
\end{remark}

\begin{proof}[Proof of Theorem \ref{linsol3D}] This proof follows the one already presented for the two-dimensional case. To this end, we need to apply the 3D Fourier transform within the operator $K$ 
\begin{align}
(K{\bf u}(t,\cdot))({\bf x})\equiv&-2\int_{B_\delta(0)}
\frac{{\bf \widehat u}(t,{\bf x})-{\bf \widehat u}(t,{\bf x}-{\bf y})}{|{\bf y}|^{3+2\alpha}}d{\bf y}
\nonumber\\
=&-2\int_{\R^3} \,{\bf \widehat u}(t,\Xi)\,e^{-i \Xi\cdot{\bf x}}
\underbrace{\int_{\mathcal{S}^3(\delta)}\frac{1-e^{i\Xi\cdot{\bf y}}}{|{\bf y}|^{3+2\alpha}}d{\bf y}}d\Xi
\nonumber\\
=&-2\int_{\R^3}\,{\bf \widehat u}(t,\Xi)\,e^{-i \Xi\cdot{\bf x}}
\underbrace{\int_0^\delta \frac{4\pi}{y^{1+2\alpha}}\left( 1-\frac{\sin \xi y}{\xi y} \right) dy}d\Xi
\nonumber\\
=&-8\pi\,\delta^{-2\alpha}\int_{\R^3}\,{\bf \widehat u}(t,\Xi)\,e^{-i \Xi\cdot{\bf x}}
\int_0^1 \frac{1-j_0(\xi\delta z)}{z^{1+2\alpha}}dz  d\Xi\,,
\end{align}
where we have defined $z:=z\delta$ in the last line. The equality between the under-braced terms in the second and third line is given as follows. Since the integral over the sphere is invariant for rotation of the coordinate system, we first align the component $y_3$ with the direction of $\Xi$. In this way, the integral can be written in polar coordinates as
\begin{align}
\int_{\mathcal{S}^3(\delta)}\frac{1-e^{i\Xi\cdot{\bf y}}}{|{\bf y}|^{3+2\alpha}}d{\bf y}
=&\int_0^\infty\int_{-1}^1\int_0^{2\pi}\frac{1-e^{i\xi y\nu}}{|{\bf y}|^{3+2\alpha}} y^2 d\phi d\nu d y
\nonumber\\
=&2\pi\int_0^\infty\int_{-1}^1\frac{1-e^{i\xi y\nu}}{y^{1+2\alpha}} d\nu d y\,,
\end{align}
where $\nu:=\frac{\Xi\cdot{\bf y}}{\xi y}$. At this point, the integration over $\nu$ in the last equation can be written as
\begin{equation}
\int_{-1}^1(1-e^{i\xi y\nu}) d\nu
=\int_{-1}^1(1-\cos\xi y\nu) d\nu-i\underbrace{\int_{-1}^1\sin\xi y\nu d\nu}_{=0}
=2\left(1-\frac{\sin\xi y}{\xi y}\right)\,,
\end{equation}
leading then to
\begin{equation}
(K{\bf u}(t,\cdot))({\bf x})=-\rho\int_{\R^3}\,{\bf \widehat u}(t,\Xi)\,e^{-i \Xi\cdot{\bf x}}
\omega^2(\xi) d\Xi\,,
\end{equation}
with
\begin{equation}
\omega^2(\xi)=\frac{8\pi\kappa}{\rho\,\delta^{2\alpha}}\int_0^1 \frac{dz}{z^{1+2\alpha}}\left( 1-\frac{\sin \xi\delta z}{\xi\delta z} \right)\,.
\end{equation}
Hence, the proof follows in the sam way as the one of Theorem \ref{linsol2D}. Indeed, the three-dimensional Cauchy problem in Fourier space reads
\begin{equation}
\begin{cases}
\partial_{tt}{\bf \widehat u}(t,\Xi)+\omega^2(\xi){\bf \widehat u}(t,\Xi)=0,&\quad \Xi\in \R^3,\: t >0,\\
\widehat\u(0,\Xi)=\widehat\u_0(\Xi),\:\partial_t \widehat\u(0,\Xi)=\widehat{\mathbf{v}}_0(\Xi),&\quad \Xi\in\R^3,
\end{cases}
\end{equation}
which is solved by
\begin{equation}
\widehat{\u}(t,\Xi)=\widehat{{\bf v}_0}(\Xi)\cos(\omega(\xi)t)+\frac{\widehat{{\bf v}_1}(\Xi)}{\omega(\xi)}\sin(\omega(\xi)t)\,,
\end{equation}
hence, the proof of the theorem is concluded.
\end{proof}

\begin{proof}[Proof of Theorem \ref{displimit3D}]
To prove Eq.~\eqref{LS3D:XD}, let us expand the Bessel function $j_0(x)$ as
\begin{equation}
j_0(x):=\frac{\sin x}{x}=\sum_{m=0}^\infty\frac{(-1)^m}{(2m+1)!}x^{2m}\,.
\end{equation}
Hence, the dispersion relation becomes
\begin{align}
\omega^2(\xi)=&-\frac{8\pi\kappa}{\rho\,\delta^{2\alpha}}\int_0^1 \frac{dz}{z^{1+2\alpha}}\sum_{m=1}^\infty\frac{(-1)^m}{(2m+1)!}(\xi\delta z)^{2m}
\nonumber\\
=&\xi^2\frac{4\pi\kappa\delta^{2(1-\alpha)}}{3\rho}\int_0^1 z^{1-2\alpha}dz
-\frac{8\pi\kappa}{\rho\,\delta^{2\alpha}}\int_0^1 \frac{dz}{z^{1+2\alpha}}\sum_{m=2}^\infty\frac{(-1)^m}{(2m+1)!}(\xi\delta z)^{2m}
\nonumber\\
=&\xi^2\frac{2\pi\kappa\delta^{2(1-\alpha)}}{3(1-\alpha)\rho}
-\frac{8\pi\kappa}{\rho\,\delta^{2\alpha}}\int_0^1 \frac{dz}{z^{1+2\alpha}}\sum_{m=2}^\infty\frac{(-1)^m}{(2m+1)!}(\xi\delta z)^{2m}\,,
\end{align}
and then
\begin{equation}
\lim_{\xi\rightarrow 0}\xi^{-2}\omega^2(\xi)=\frac{2\pi\kappa\delta^{2(1-\alpha)}}{3(1-\alpha)\rho}\,,
\end{equation}
which proves Eq.~\eqref{LS3D:XD}.

For the second part of the proof, we first define $\tau:=\xi\delta z$ such that the dispersive relation becomes
\begin{equation}
\omega^2(\xi)=\frac{8\pi\kappa}{\rho}\xi^{2\alpha}\,\int_0^{\xi \delta} \frac{1-j_0(\tau)}{\tau^{1+2\alpha}}d\tau\,.
\end{equation}
The desired limit then is
\begin{equation}
\lim_{\xi\rightarrow\infty}\xi^{-2\alpha}\omega^2(\xi)=\frac{8\pi\kappa}{\rho}\,\int_0^\infty \frac{1-j_0(\tau)}{\tau^{1+2\alpha}}d\tau\,,
\end{equation}
eventually proving the theorem.
\end{proof}

\begin{proof}[Proof of Corollary \ref{cor:isotropic3D}]
Just as for the proof of Corollary \ref{cor:isotropic2D}, it is enough to prove that $\widehat{{\bf v}_0}$ and $\widehat{{\bf v}_1}$ only depends  on the modulus $\xi$.
Indeed, we have 
\begin{align}
\widehat{{\bf v}_0}(\Xi)=&\frac{1}{(2\pi)^3}\int_{\R^2}{\bf v}_0\left(\sqrt{x^2_1+x^2_2+x^2_3}\right)e^{i\x\cdot\Xi}d\x
\nonumber\\
=&\frac{1}{(2\pi)^3}\int_{-1}^1\int_0^\infty\int_0^{2\pi}x^2\,{\bf v}_0\left(x\right)e^{ix\xi\nu}d\phi d\nu dx
\nonumber\\
=&\frac{2}{(2\pi)^2}\int_0^\infty x^2\,{\bf v}_0\left(x\right)j_0(\xi x) dx\,,
\end{align}
where we have adopted polar coordinates and aligned the $z$-axis of the reference  system with the direction of $\x$ and defined $\nu:=\frac{\Xi\cdot\x}{\xi x}$. We obtained then that $\widehat{{\bf v}_0}(\Xi)$
is solely function of $\xi$ rather than $\Xi$. The same occurs for $\widehat{{\bf v}_1}(\Xi)$.
\end{proof}

\section{Results and discussion}
\label{results}
\subsection{Analytical findings} A direct comparison with the one-dimensional case shows that the two- and three-dimensional cases share the same scaling with the uni-dimensional solutions. As a matter of fact, the dispersive relations \eqref{eq:disp_rel2D} and \eqref{eq:disp_rel3D} have the same dependence on $\delta^{-2\alpha}$ occurring in \eqref{eq:disp_rel} as $\xi\rightarrow 0$ and $\xi \rightarrow \infty$. Since this dependence is intimately related to the scaling of $\kappa$ vs $\delta$, this feature is a hint of the fact that this dependence is universal regardless of the dimension of the system.

Moreover, the dispersive relation depends solely on the modulus $\xi$ rather than the entire Fourier vector $\Xi$, no matter what the dimension of the system is. This behavior is somehow expected for small value of $\xi$, i.e. $\xi\delta\ll 1$. Indeed, in this regime the spatial scale of the Fourier mode is much greater than the peridynamics radius $\delta$: here the dynamics of the standard elastic theory must be recovered and then the dispersive relation must depend only on $\xi$. On the other hand, having recovered this isotropy also on small scales Fourier modes, where $\xi \delta \gg 1$, is less trivial. We address this interesting feature to the fact that the integral peridynamic kernel $K$ averages with the same weight all the possible directions of the shift vector $\u(\x)-\u(\x-\y)$.
\begin{table}[ht!]
\begin{center}
\begin{tabular}{|c|c|c|}
\hline
 & $\xi\delta\ll 1$ & $\xi\delta\gg 1$\\
\hline
\rule[-4mm]{0mm}{1cm}
$N=1$ & $\frac{\kappa\,\delta^{2(1-\alpha)}}{(1-\alpha)\,\rho}\xi^2$ & $-\frac{4\kappa}{\rho}\cos(\pi\alpha)\Gamma(-2\alpha)\,\xi^{2\alpha}$\\
\hline
\rule[-4mm]{0mm}{1cm}
$N=2$ & $\frac{\pi\kappa\,\delta^{2(1-\alpha)}}{2(1-\alpha)\,\rho}\xi^2$ & $-\frac{2^{1-2\alpha}\pi\kappa}{\rho}\frac{\Gamma(-\alpha)}{\Gamma(1+\alpha)}\xi^{2\alpha}$\\
\hline
\rule[-4mm]{0mm}{1cm}
$N=3$ & $\frac{2\pi\kappa\,\delta^{2(1-\alpha)}}{3(1-\alpha)\,\rho}\xi^2$ & $\frac{8\pi\kappa}{\rho}\cos(\pi\alpha)\Gamma(-1-2\alpha)\,\xi^{2\alpha}$\\
\hline
\end{tabular}
\caption{Dispersive relations in the limit regimes of large ($\xi\delta\ll 1$) and small ($\xi\delta\ll 1$) scales in terms of the dimensionality of the system.}
\label{tab:resume}
\end{center}
\end{table}

To thoroughly characterize the dispersive relation, we resume in Table \ref{tab:resume} the asymptotic limits of $\omega^2(\xi)$ for $\xi\delta\ll 1$ and $\xi\delta\gg 1$ for different values of the dimension $N$. As a further step with respect to the claims of Theorems \ref{IMP}, \ref{IMP2D} and \ref{IMP3D}, the small scales limits have been formally written in terms of the Euler Gamma function $\Gamma(x)$ and these are shown in Fig.\ref{fig:small}.
\begin{figure}[ht!]
\centering
\includegraphics[scale=1]{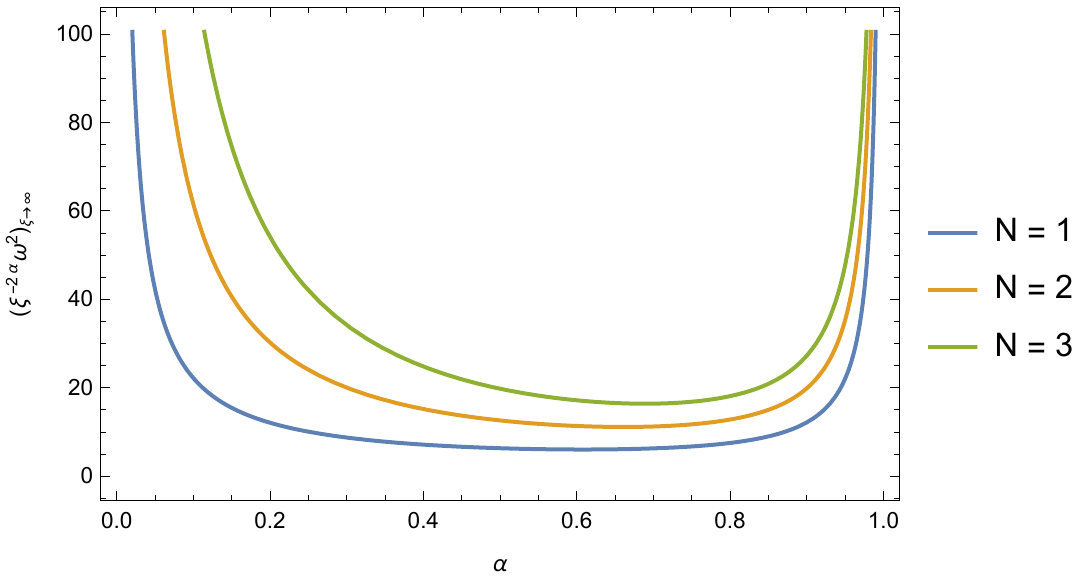}
\caption{Small scales ($\xi\delta\gg 1$) amplitude of the dispersive relation versus the value of $\alpha$ for different dimensions, with $\kappa=\rho=1$.}
\label{fig:small}
\end{figure}
Two different features concerning  small and large scales regimes clearly emerge:
\begin{itemize}
\item for $\xi\delta\ll 1$, the limits for different values of $N$ have different amplitudes. However, they shares an universal scaling with the constitutive parameters of the peridynamics $\delta$ and $\alpha$;
\item on the contrary, when $\xi\delta\gg 1$, the overall amplitudes do not depend on $\delta$. Nevertheless, the dependence on $\alpha$ changes with $N$. From Fig. \ref{fig:small}, it is evident that the qualitative behavior is the same, even though the lower the dimension, the smaller the amplitude. This feature is much evident for small values of $\alpha$.
\end{itemize}

\subsection{Numerical experiments}

To conclude this work, we provide a numerical study for the propagation of elastic deformations in the two and three dimensional case. The analysis that we are going to present is analogous to the one for the one dimensional case in \cite{CDFMV,Coclite_2021}. For the purposes of this paper, we will show numerical evidences that the speed of propagation for an elastic deformation is weakly dependent on the dimensionality of the system, whereas the constitutive micro-scale parameters $\alpha$ and $\delta$ are the main ingredients to estimate such speed of propagation. To this end, we study numerical solutions in correspondence of the following initial conditions
\begin{equation}
v_0(r)=\left( 2\pi \right)^{N/2}\exp\left(-\frac{r^2}{2\,\sigma^2}\right)
\qquad\text{and}\qquad
v_1(r)=0\,,
\label{eq:Cauchy}
\end{equation}
in complete analogy with  the one dimensional system. We also remark that we are limiting our case study by taking $v_0$ and $v_1$ as function in $\R$ rather than $\R^N$. This choice has been done for the sake of simplicity since our conclusion can be immediately extended to the general case.

According to our convention, the Fourier transforms of the initial conditions \eqref{eq:Cauchy} then become
\begin{equation}
\widehat{v_0}(\xi)=\sigma^N\exp\left(-\frac{\xi^2\sigma^2}{2}\right)
\qquad\text{and}\qquad
\widehat{v_1}(\xi)=0\,,
\label{eq:Cauchy_Fourier}
\end{equation}
which satisfy the assumptions of Corollaries \ref{cor:isotropic2D} and \ref{cor:isotropic3D}. The role of $\sigma$ in the initial conditions is crucial to put in evidence the dispersive features due to peridynamic. Indeed, Eq.~\eqref{eq:Cauchy_Fourier} undisputedly privileges the Fourier modes such that $\xi\sigma\lesssim 1$ in the imprinted deformations. The peculiar scales $\xi$ and $\sigma$ then must be compared with the unique constitutive scale $\delta$, since the latter marks the change in the regime for the dispersive relation.

In details, based on the results obtained for the one dimensional case \cite{CDFMV,Coclite_2021}, we expect also for the multi-dimensional scenarios that:
\begin{itemize}
\item[{\it i})]{When $\sigma\gtrsim\delta$, the dispersion due to the peridynamic kernel is hidden since only large scale modes are switched on, given the hierarchy $\xi\lesssim\sigma^{-1}\lesssim\delta^{-1}$. With reference to Table \ref{tab:resume}, these modes can only be dispersed in qualitative agreement with the standard theory of elasticity and the peridynamics parameters may play a role only in the effective group velocity.}
\item[{\it ii})]{Conversely, when $\sigma\ll\delta$, the above-mentioned hierarchy among the scales translates as $\xi\lesssim\sigma^{-1}$ {\it and} $\delta^{-1}\ll\sigma^{-1}$: in principle, both modes $\xi>\delta^{-1}$ and $\xi<\delta^{-1}$ can propagate throughout the system, enabling then the peridynamics evolution to be investigated. Moreover, as a matter of fact, smaller scale modes will travel slower than the large ones, due to the sub-linearity of the dispersive relation.}
\end{itemize}

The understanding described so far is based on the results shown in the previous sections and the detailed study of the one-dimensional case. In the following, we will provide numerical evidences for our claimed phenomenology. To this end, for both the two and three dimensional propagation, we will keep fixed $\kappa=\rho=\delta=1$ and consider the elucidative cases of $\alpha=\{1/10,1/2,9/10\}$ and $\sigma=1$ or $\sigma=1/10$, respectively case {\it i}) and {\it ii}) in the item list above. The choice for $\delta$ to be equal to $1$ might look peculiar. However, it will simplify our discussion concerning the speed of propagation of the elastic deformations, since it will hide all the factors $\delta^{\alpha-1}$ in Table \ref{tab:resume}. We remark that this will not provide any loss of generality if our numerical investigations, since what physically matters to enlighten the dispersive phenomena is the ratio $\delta/\sigma$ rather than their individual amplitudes.

We begin by focusing on the two-dimensional case for the case {\it i}) in Fig.~\ref{fig:N2_s1} and case {\it ii}) in Fig.~\ref{fig:N2_s110}. For $\sigma=1$, Fig.~\ref{fig:N2_s1} exhibits a really mild dispersion as time goes on, regardless on the value of $\alpha$. What changes instead is the speed of propagation of the deformation imprinted by the initial conditions. An estimate of this speed of propagation can be actually obtained from Table \ref{tab:resume}. In fact, for the science case $\sigma=\delta=1$, a non-rigorous but nonetheless accurate estimate for the speed of propagation can be inferred by the group velocity\footnote{A detailed Proof of equivalence between the velocity of the energy flux and the group velocity for to one-dimensional linear peridynamics has been provided in \cite{Coclite_2021}. For the two and three dimensional case, we limit our analysis to the numerical evidences.} $v_g:=\omega'(\xi)$. When $\xi\delta\lesssim1$, we have that $v_g\approx \delta^{1-\alpha}\sqrt{\frac{\pi\kappa\,}{2(1-\alpha)\,\rho}}=\sqrt{\frac{\pi\,}{2(1-\alpha)}}$: the higher $\alpha$, the faster the propagation. In details,
\begin{table}[ht!]
\begin{center}
\begin{tabular}{|c|c|}
\hline
 $\alpha$ & $v_g$\\
\hline
$9/10$ & $3.96$ \\
\hline
$1/2$ & $1.77$\\
\hline
$1/10$ & $1.32$\\
\hline
\end{tabular}
\caption{Estimation of the large scale group velocity for the two-dimensional case with $\kappa=\rho=\delta=1$ for the chosen values of $\alpha$ in Fig.~\ref{fig:N2_s1}.}
\label{tab:N2_group}
\end{center}
\end{table}
despite its roughness, our estimation for the speed of propagation of the signals is in quite good agreement with what emerges from Fig.~\ref{fig:N2_s1}.
\begin{figure}[ht!]
\centering
\includegraphics[scale=0.26]{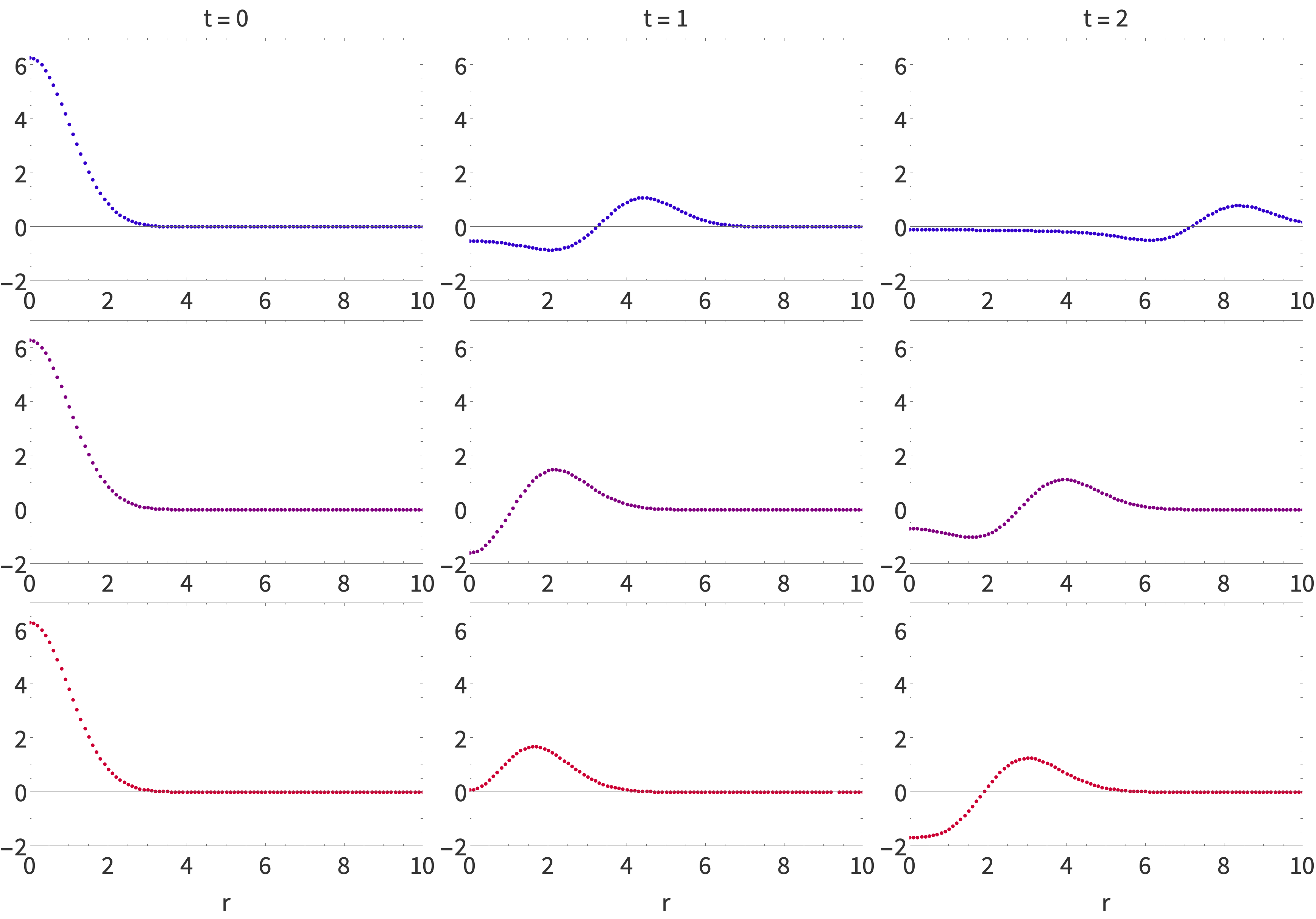}
\caption{Isotropic 2D solution with $\kappa=\rho=\delta=1$ and $\sigma=1$ for $t=0,1,2$ (from right to left panels) and for $\alpha=9/10,1/2,1/10$ (from top to bottom panels).}
\label{fig:N2_s1}
\end{figure}

The same numerical investigation has been repeated for the case {\it ii}) with $\sigma=1/10$ and reported in Fig.~\ref{fig:N2_s110}. Two main differences are evident between the two cases: when $\sigma=1/10$, the dispersion is evident and the looks slower than the case {\it i}).
\begin{figure}[ht!]
\centering
\includegraphics[scale=0.26]{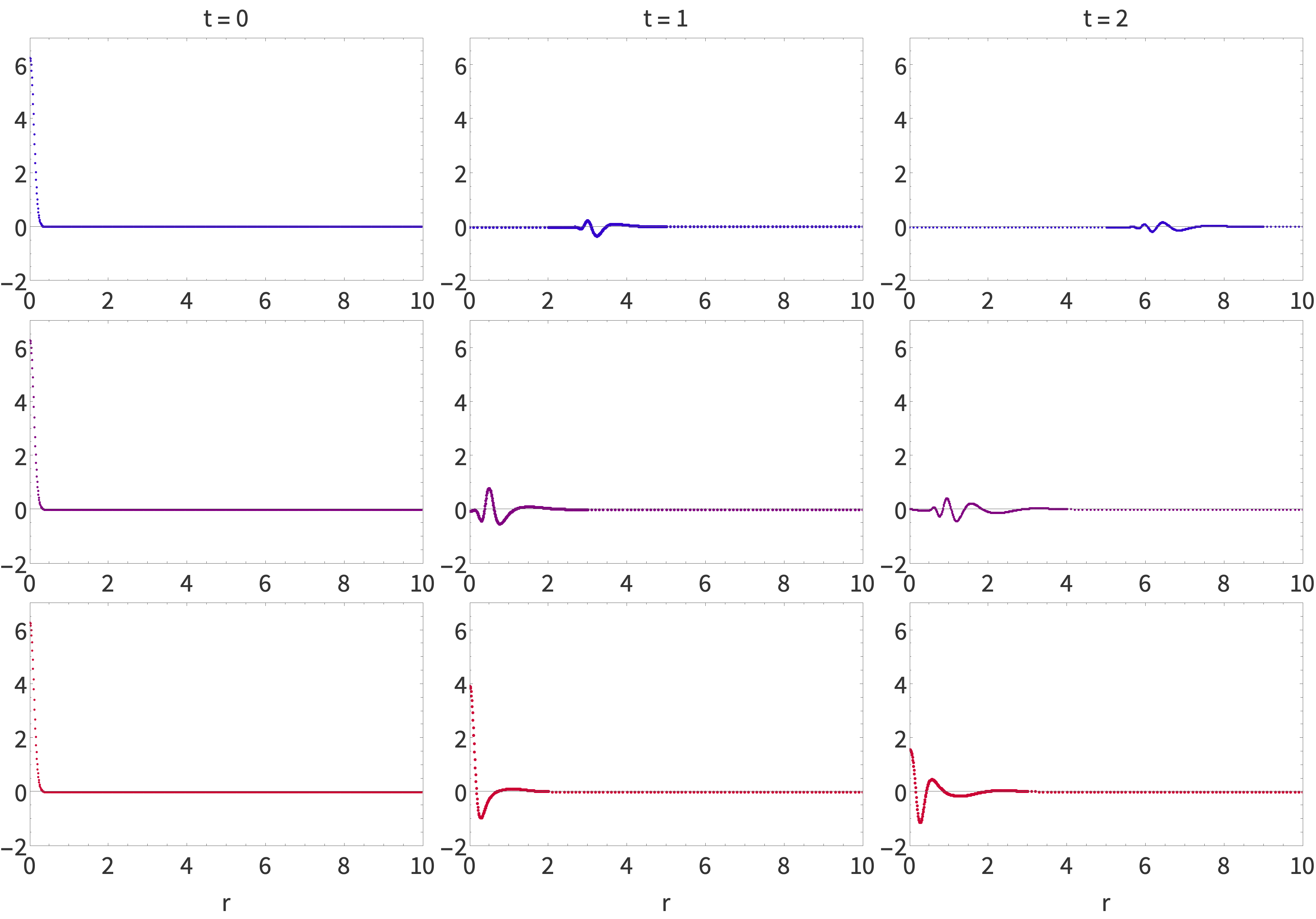}
\caption{Isotropic 2D solution with $\kappa=\rho=\delta=1$ and $\sigma=1/10$ for $t=0,1,2$ (from right to left panels) and for $\alpha=9/10,1/2,1/10$ (from top to bottom panels).}
\label{fig:N2_s110}
\end{figure}
These features can be explained again by looking at the Table \ref{tab:resume}. Indeed, by invoking the role of the velocity group also for the case {\it ii}), all the scales such that $\xi\gg\delta$ travel with group velocity which is suppressed by a factor $\xi^{\alpha-1}$ and this suppression is undisputedly more severe for smaller values of $\alpha$.

The phenomenology of the three dimensional cases is qualitatively the same as the the two dimensional ones and this can be appreciated in Figs.~\ref{fig:N3_s1} and \ref{fig:N3_s110} respectively for $\sigma=1$ and $\sigma=1/10$.
\begin{figure}[ht!]
\centering
\includegraphics[scale=0.26]{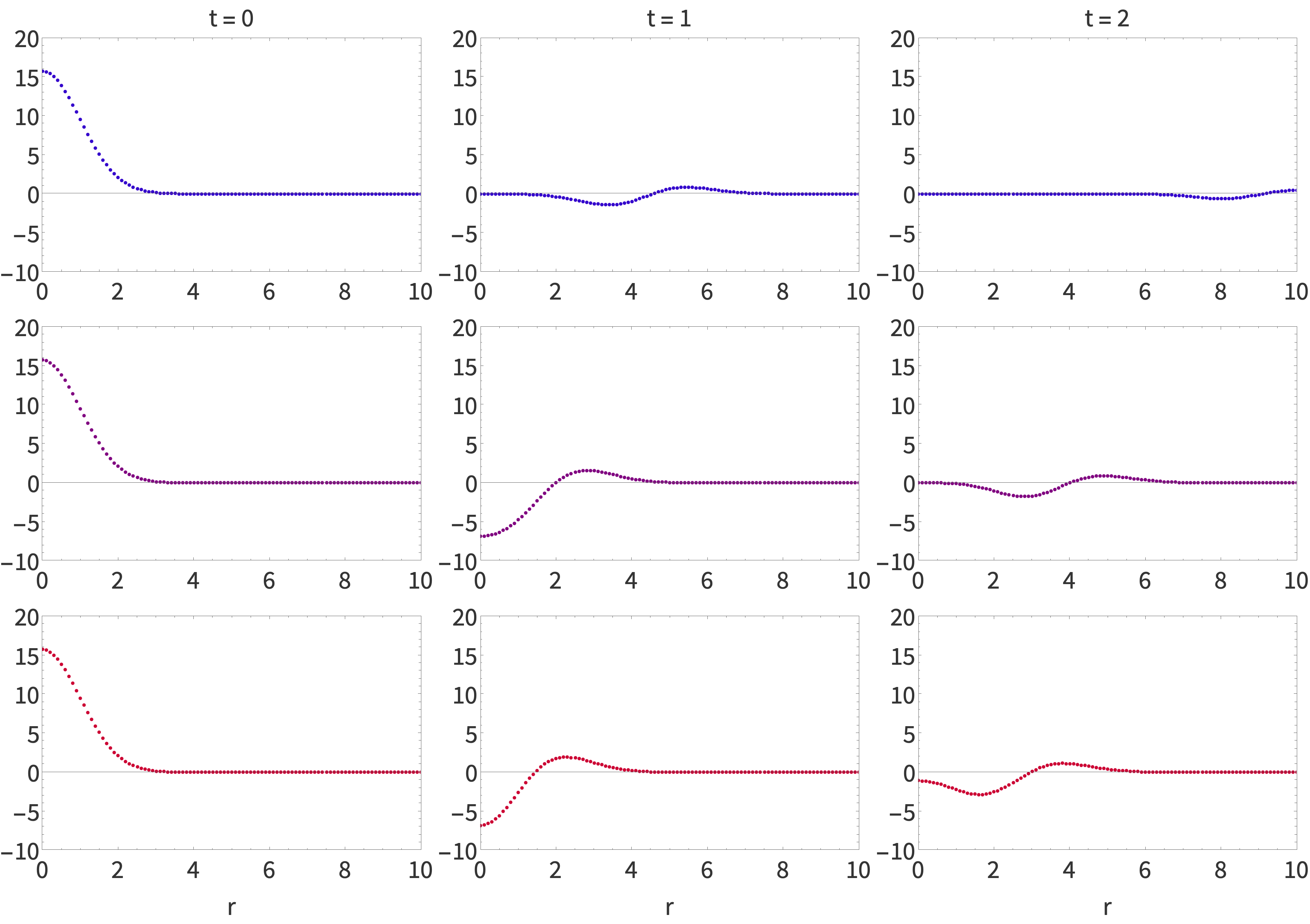}
\caption{Isotropic 3D solution with $\kappa=\rho=\delta=1$ and $\sigma=1$ for $t=0,1,2$ (from right to left panels) and for $\alpha=9/10,1/2,1/10$ (from top to bottom panels).}
\label{fig:N3_s1}
\end{figure}

The main quantitative disclaimer w.r.t. to the two dimensional scenario can be provided in regard of the group velocity for the large scale modes (i.e. $\xi\delta\lesssim 1$). From Table \ref{tab:resume}, we get that the large scale group velocity for the three dimensional case is $v_g\approx \delta^{1-\alpha}\sqrt{\frac{2\pi\kappa}{3(1-\alpha)\,\rho}}=\sqrt{\frac{2\pi}{3(1-\alpha)}}$.
\begin{table}[ht!]
\begin{center}
\begin{tabular}{|c|c|}
\hline
 $\alpha$ & $v_g$\\
\hline
$9/10$ & $4.58$ \\
\hline
$1/2$ & $2.05$\\
\hline
$1/10$ & $1.53$\\
\hline
\end{tabular}
\caption{Estimation of the large scale group velocity for the three-dimensional case with $\kappa=\rho=\delta=1$ for the chosen values of $\alpha$ in Fig.~\ref{fig:N3_s1}.}
\label{tab:N2_1_group}
\end{center}
\end{table}

\begin{figure}[ht!]
\centering
\includegraphics[scale=0.26]{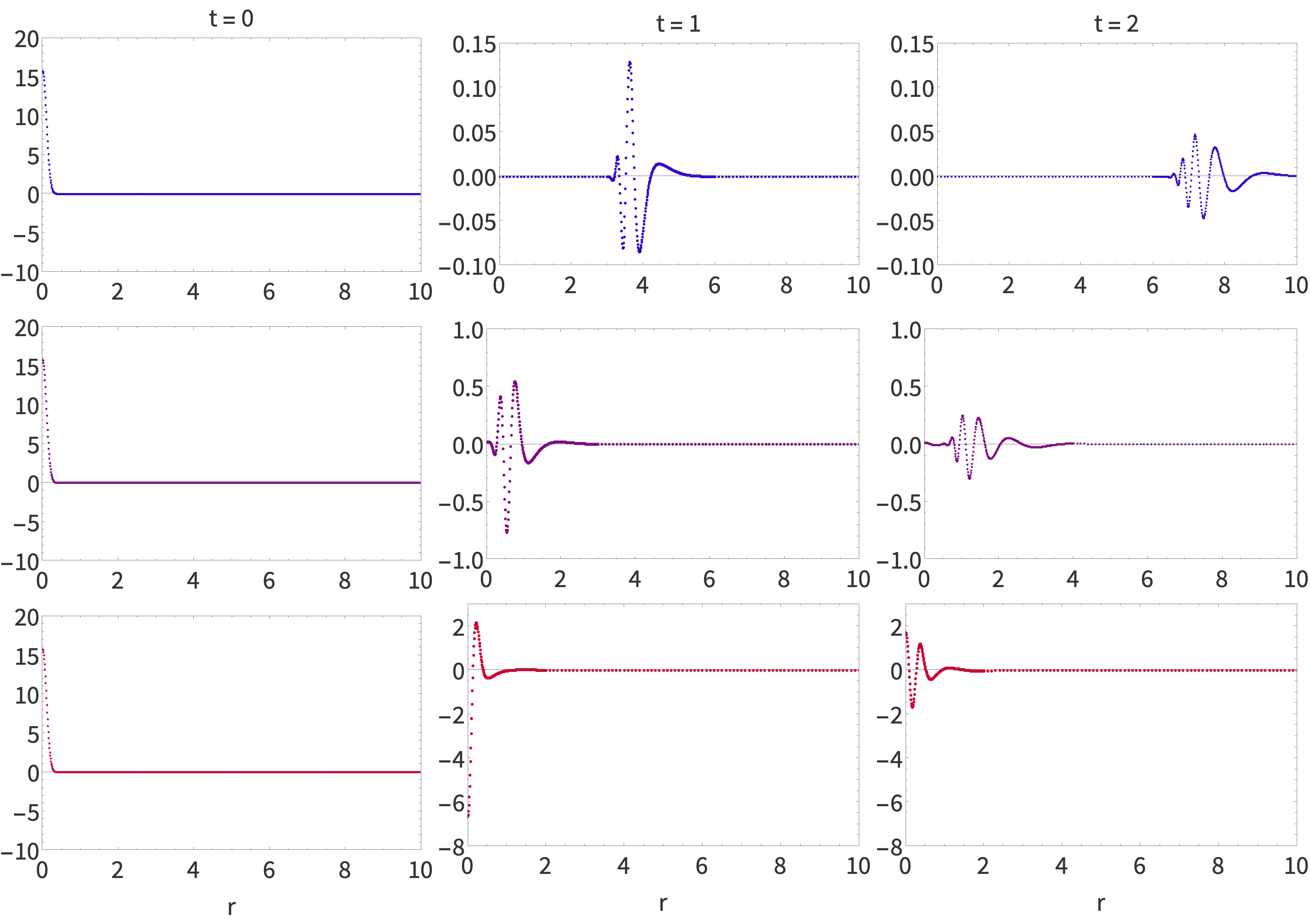}
\caption{Isotropic 3D solution with $\kappa=\rho=\delta=1$ and $\sigma=1/10$ for $t=0,1,2$ (from right to left panels) and for $\alpha=9/10,1/2,1/10$ (from top to bottom panels).}
\label{fig:N3_s110}
\end{figure}
Interestingly, for the three-dimensional case the intrinsic velocity of the deformation is slightly higher than the two dimensional case and this is consistent with Fig.~\ref{fig:N3_s1}. From a more qualitative point of view, this speed of propagation is suppressed for smaller scales perturbations exactly as expected by the $\xi^{\alpha-1}$ suppression already discussed for the two dimensional case.

\section{Anisotropic initial conditions}
\begin{figure}[ht!]
\centering
\includegraphics[scale=0.21]{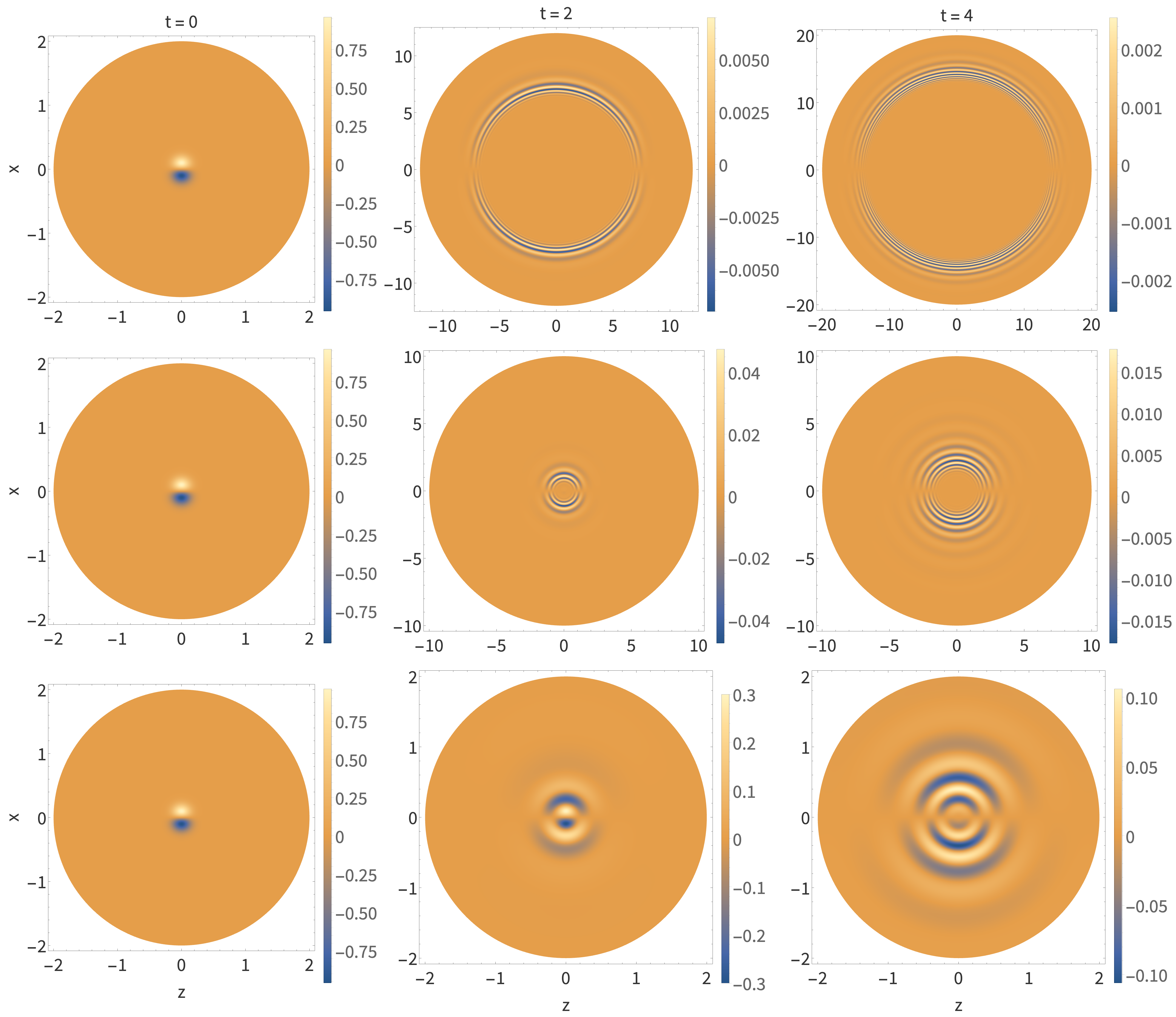}
\caption{Anisotropic 3D solution with $\kappa=\rho=\delta=1$ and $\sigma=1/10$ for $t=0,2,4$ (from right to left panels) and for $\alpha=9/10,1/2,1/10$ (from top to bottom panels).}
\label{fig:N3_s110_An}
\end{figure}
The machinery developed so far can be applied to investigate the propagation of anisotropic initial conditions even though the peridynamic kernel is isotropic. Indeed, according to the hypothesis of Theorem \ref{linsol3D}, an anisotropic 3D Cauchy problem is given by two initial conditions which are functions\footnote{Here $\n$ indicates the direction subtended by the angles $\left(\theta,\phi\right)$. To lighten the notation, in the remaining part of the work, we will use $\n$ to indicate }
\begin{align}
v_0=v_0(r,\n)\qquad\text{and}\qquad
v_1=v_1(r,\n)\,,
\label{eq:51}
\end{align}
where, in polar coordinates, $v_0,v_1\in\mathcal{S}(\R)\times L^2\left( \left[ 0,\pi \right]\times\left[ 0,2\pi \right] \right)$. Therefore, they can be written in terms of the spherical harmonic $Y_{\ell m}(\n)$ basis, namely
\begin{align}
v_0(r,\n)=\sum_{\ell=0}^\infty\sum_{m=-\ell}^\ell v^0_{\ell m}(r) Y_{\ell m}(\n)\,,\nonumber\\
v_1(r,\n)=\sum_{\ell=0}^\infty\sum_{m=-\ell}^\ell v^1_{\ell m}(r) Y_{\ell m}(\n)\,,
\label{eq:52}
\end{align}
where
\begin{align}
v^0_{\ell m}(r):=\int_{B^2(1)} v_0(r,\n)Y^*_{\ell m}(\n)d\n\,,\nonumber\\
v^1_{\ell m}(r):=\int_{B^2(1)} v_1(r,\n)Y^*_{\ell m}(\n)d\n\,.
\label{eq:multipoles}
\end{align}
Here $v^0_{\ell m},v^1_{\ell m}(r):\R^+\rightarrow\mathbb{C}$, $v^0_{\ell m},v^1_{\ell m}(r)\in\mathcal{S}(\R)$ and $B_1(0)$ is the unitary 2D sphere. In this case, we have  the explicit solution \eqref{eq:sol3D}  given by
\begin{align}
u\left( t,r,\n \right)=&\frac{2}{\pi}
\sum_{L=0}^\infty\sum_{M=-L}^L\,Y_{LM}\left( \n \right)
\int_0^\infty \xi^2j_L\left( \xi r \right)\,\cos\left(\omega(\xi)\,t\right)
\int_0^\infty v^0_{LM}(s)s^{2}\,j_L\left( \xi s \right)ds
d\xi\nonumber\\
&+\frac{2}{\pi}
\sum_{L=0}^\infty\sum_{M=-L}^L\,Y_{LM}\left( \n \right)
\int_0^\infty \xi^2j_L\left( \xi r \right)\,\frac{\sin\left(\omega(\xi)\,t\right)}{\omega(\xi)}
\int_0^\infty v^1_{LM}(s)s^{2}\,j_L\left( \xi s \right)ds
d\xi\,.
\label{eq:general_factorization}
\end{align}
The detailed proof of \eqref{eq:general_factorization} for a generic angular dependence is reported in \ref{app:A}. The remarkable technical point is that the factorisation occurring between the radial and the angular dependence in \eqref{eq:general_factorization} is general. Moreover, the numerical advantage of this chosen form stands in the fact that the needed numerical scheme to study the anisotropic initial conditions is the same as the isotropic one, namely a countable series two-dimensional integrals. This property is a general feature of the above-mentioned factorisation between the angular and the radial dependences and follows from the explicit expansion in the spherical harmonics basis.

As an informative science case study, we consider the 3D solution \eqref{eq:general_factorization} with initial conditions given by
\begin{align}
v_0(r,\theta)=&\left(2\pi\right)^{3/2}\exp\left( -\frac{r^2}{2\,\sigma^2} \right)r\cos\theta\nonumber\\
=&4\pi^2\sqrt{\frac{2}{3}}\exp\left( -\frac{r^2}{2\,\sigma^2} \right)r\,Y_{10}\left( \n \right)\,,\nonumber\\
v_1(r,\theta)=&0\,.
\label{eq:An_In_Con}
\end{align}
To our ends, it is then enough to evaluate the multipoles of the initial conditions in Eqs.~\eqref{eq:multipoles} to make use of the factorised solution \eqref{eq:general_factorization}. We straightforwardly have that
\begin{align*}
v^0_{\ell m}(r)=&4\pi^2\sqrt{\frac{2}{3}}\exp\left( -\frac{r^2}{2\,\sigma^2} \right)r
\int_{B^2(1)}Y_{10}\left( \n \right)Y^*_{\ell m}(\n)d\n\\
=&4\pi^2\sqrt{\frac{2}{3}}\exp\left( -\frac{r^2}{2\,\sigma^2} \right)r\delta_{\ell 1}\,\delta_{m0}\,,\\
v^1_{\ell m}(r)=&0\,.
\end{align*}
The Kronecker delta for $v^0_{\ell m}$ follows from the orthonormality of the spherical harmonics. From a physical viewpoint, the fact that only $\ell=1$ multipoles survive means that the we are dealing with a dipolar anisotropy, namely a our initial conditions have a front-back asymmetry. The numerical solutions for this dipolar anisotropy and different values of $\alpha$ are shown in Fig.~\ref{fig:N3_s110_An}.

Here we have considered just the case $\sigma/\delta=10^{-1}\ll 1$ where the dispersive behavior is more evident. It is evident from the numerical solutions that the ``shape" of the anisotropy is preserved as time passes by. This is a consequence of the factorization already discussed. All the other features regarding the dependence of the speed of propagation and the attenuation of the overall amplitude are manifest in accordance with what already pointed out in the radial case.

\section*{Conclusions and future works} 

The analysis of the dispersive relation for the peridynamics evolution of a two- and three-dimensional system has been deepened in this work through suitable numerical and analytical findings. We show that cases with different dimensionalities exhibit the dependence on the nonlocal interaction length $\delta$ in their dispersive relation, thus enlightening the universal characteristic of this dependence. Moreover, the dispersive relations are found to depend solely on the modulus of the Fourier variable $\xi$ and the two scenario related to small ($\xi\delta\ll 1$) and large scales ($\xi\delta\gg 1$) are analyzed in details.

\section*{Acknowledgments} AC is a member of Gruppo Nazionale per il Calcolo Scientifico (GNCS), GF of Gruppo Nazionale per la Fisica Matematica (GNFM), GMC and FM of Gruppo Nazionale per l'Analisi Matematica, la Probabilit\`{a} e le loro Applicazioni (GNAMPA) of the Istituto Nazionale di Alta Matematica (INdAM).
This work was partially supported by the project ``Research for Innovation'' (REFIN) - POR Puglia FESR FSE 2014-2020 - Asse X - Azione 10.4 (Grant No. CUP - D94120001410008); Fundacao para a Ciencia e a Tecnologia (FCT) under the program ``Stimulus'' with grant no. CEECIND/04399/2017/CP1387/CT0026 and through the research project with ref. number PTDC/FIS-AST/0054/2021; Italian Ministry of Education, University and Research under the Programme Department of Excellence Legge 232/2016 (Grant No. CUP - D94I18000260001). GMC expresses its gratitude to  
HIAS - Hamburg Institute for Advanced Study for their warm hospitality.

\bibliographystyle{abbrvnat}
\bibliography{biblio.bib}

\newpage
\appendix
\section{Proof of Eq.~\eqref{eq:general_factorization}}
\label{app:A}
To prove that the initial conditions \eqref{eq:51} with the multipolar decompositions \eqref{eq:52} actually admit Eqs.~\eqref{eq:general_factorization} for the proposed anisotropic Cauchy problem, let us first evaluate the Fourier transform of $v_0(r,\n)$. Without any loss of generality, we write a generic anisotropy in the form
\begin{equation}
v_0(r,\n)=\sum_{L=0}^\infty\sum_{M=-L}^L v^0_{L M}(r) Y_{L M}(\n)\,,
\end{equation}
where $\n:=\left( \sin\theta\cos\phi,\sin\theta\sin\phi,\cos\phi \right)$ is the direction subtended by the position $\x$ and $Y_{LM}$ are the spherical harmonics. The advantage of using $Y_{LM}$ stands in the fact that they provide a basis for all space of the functions $f\left(\n\right)$, i.e. we can express any anisotropy in the 3D space as a linear combination of spherical harmonics. Moreover, the dependences on $\theta$ and $\phi$ are respectively discretised by the indices $\ell$ and $m$.

Hence, according to our conventions for the Fourier transform, we have
\begin{align}
\widehat{v_0}(\xi,\n_\xi)
=&\frac{1}{\left( 2\pi \right)^3}
\int_0^\infty\int_0^\pi\int_0^{2\pi}v_0(r,\n)\,e^{i\,r\xi\,\n\cdot\n_\xi}\,r^2\sin\theta\,dr d\theta d\phi
\nonumber\\
=&\frac{1}{\left( 2\pi \right)^3}\sum_{L=0}^\infty\sum_{M=-L}^L
\int_0^\infty\int_0^\pi\int_0^{2\pi}v^0_{LM}(r)Y_{LM}\left( \n \right)\,e^{i\,r\xi\,\n\cdot\n_\xi}\,r^2\sin\theta\,dr d\theta d\phi\,,
\label{eq:A1}
\end{align}
where $\n$ and $\n_\xi$ are respectively the direction on the unitary sphere of $\x$ and $\Xi$. To our ends, we use the spherical harmonics decomposition for the exponential
\begin{equation}
e^{i\,r\xi\,\n\cdot\n_\xi}
=4\pi\sum_{\ell=0}^\infty\sum_{m=-\ell}^\ell i^\ell\,j_\ell\left( \xi r \right)\,Y_{\ell m}\left( \n \right)Y^*_{\ell m}\left( \n_\xi \right)\,,
\label{eq:A2}
\end{equation}
where $j_\ell(x)$ are the spherical Bessel functions of $\ell$-th order and $*$ denotes the complex conjugation. In this way, we can write Eq.~\eqref{eq:A1} as
\begin{align}
\widehat{v_0}(\xi,\n_\xi)
=&\left(-1\right)^M\frac{4\pi}{\left( 2\pi \right)^3}
\sum_{\ell=0}^\infty\sum_{m=-\ell}^\ell
\sum_{L=0}^\infty\sum_{M=-L}^L i^\ell\,Y^*_{\ell m}\left( \n_\xi \right)
\int_0^\infty v^0_{LM}(r)r^{2}\,j_\ell\left( \xi r \right)dr
\nonumber\\
&\times\underbrace{\int_0^\pi\int_0^{2\pi}Y^*_{L-M}\left( \n \right)\,Y_{\ell m}\left( \n \right)\,\sin\theta d\theta d\phi}_{=\delta_{\ell L}\delta_{m -M}}
\nonumber\\
=&\sum_{L=0}^\infty\sum_{M=-L}^L\left(-1\right)^M\frac{4\pi}{\left( 2\pi \right)^3}
 i^L\,Y^*_{L -M}\left( \n_\xi \right)
\int_0^\infty v^0_{LM}(r)r^{2}\,j_L\left( \xi r \right)dr\,,
\label{eq:A3}
\end{align}
where
in the second line we have used the orthonormal condition of the spherical harmonics
\begin{equation*}
\int_0^\pi\int_0^{2\pi}Y_{\ell m}\left( \n \right)Y^*_{\ell' m'}\left( \n \right)\,\sin\theta d\theta d\phi=\delta_{\ell \ell'}\delta_{mm'}\,.
\end{equation*}
In the same way, we also have that
\begin{align}
\widehat{v_1}(\xi,\n_\xi)
=&\sum_{L=0}^\infty\sum_{M=-L}^L\left(-1\right)^M\frac{4\pi}{\left( 2\pi \right)^3}
 i^L\,Y^*_{L -M}\left( \n_\xi \right)
\int_0^\infty v^1_{LM}(r)r^{2}\,j_L\left( \xi r \right)dr\,,
\label{eq:A4}
\end{align}

Now we can finally prove our solution directly from the theorem \ref{linsol3D} with the aim of Eqs.~\eqref{eq:A3}. We get
\begin{align}
u(t,r,\n)
=&\int_0^\infty\int_0^\pi\int_0^{2\pi} e^{-i\,\xi r\,\n\cdot \n_\xi}\widehat{v_0}(\xi,\n_\xi)\cos\left(\omega(\xi)\,t\right)
\xi^2\sin\theta_\xi d\xi d\theta_\xi d\phi_\xi
\nonumber\\
&+\int_0^\infty\int_0^\pi\int_0^{2\pi} e^{-i\,\xi r\,\n\cdot \n_\xi}\widehat{v_1}(\xi,\n_\xi)\frac{\sin\left(\omega(\xi)\,t\right)}{\omega(\xi)}
\xi^2\sin\theta_\xi d\xi d\theta_\xi d\phi_\xi\nonumber\\
=&\frac{\left(4\pi\right)^2}{\left(2\pi \right)^3}\sum_{\ell=0}^\infty\sum_{m=-\ell}^\ell
\sum_{L=0}^\infty\sum_{M=-L}^L \left(-i\right)^\ell\,i^L\,Y_{\ell m}\left( \n \right)
\underbrace{\int_0^\pi\int_0^{2\pi}\,
 \left(-1\right)^MY^*_{L-M}\left( \n_\xi \right)Y^*_{\ell m}\left( \n_\xi \right)\sin\theta_\xi d\theta_\xi d\phi_\xi}_{=\delta_{L\ell}\delta_{Mm}}\nonumber\\
&\times\int_0^\infty \xi^2j_\ell\left( \xi r \right)\,\cos\left(\omega(\xi)\,t\right)
\int_0^\infty v^0_{LM}(s)s^{2}\,j_L\left( \xi s \right)ds
d\xi\nonumber\\
&+\frac{\left(4\pi\right)^2}{\left(2\pi \right)^3}\sum_{\ell=0}^\infty\sum_{m=-\ell}^\ell
\sum_{L=0}^\infty\sum_{M=-L}^L \left(-i\right)^\ell\,i^L\,Y_{\ell m}\left( \n \right)
\underbrace{\int_0^\pi\int_0^{2\pi}\,
 \left(-1\right)^MY^*_{L-M}\left( \n_\xi \right)Y^*_{\ell m}\left( \n_\xi \right)\sin\theta_\xi d\theta_\xi d\phi_\xi}_{=\delta_{L\ell}\delta_{Mm}}\nonumber\\
&\times\int_0^\infty \xi^2j_\ell\left( \xi r \right)\,\frac{\sin\left(\omega(\xi)\,t\right)}{\omega(\xi)}
\int_0^\infty v^1_{LM}(s)s^{2}\,j_L\left( \xi s \right)ds
d\xi\nonumber\\
=&\frac{2}{\pi}
\sum_{L=0}^\infty\sum_{M=-L}^L\,Y_{LM}\left( \n \right)
\int_0^\infty \xi^2j_L\left( \xi r \right)\,\cos\left(\omega(\xi)\,t\right)
\int_0^\infty v^0_{LM}(s)s^{2}\,j_L\left( \xi s \right)ds
d\xi\nonumber\\
&+\frac{2}{\pi}
\sum_{L=0}^\infty\sum_{M=-L}^L\,Y_{LM}\left( \n \right)
\int_0^\infty \xi^2j_L\left( \xi r \right)\,\frac{\sin\left(\omega(\xi)\,t\right)}{\omega(\xi)}
\int_0^\infty v^1_{LM}(s)s^{2}\,j_L\left( \xi s \right)ds
d\xi\,,
\end{align}
where we have made use again of the decomposition \eqref{eq:A2} and of the properties of the spherical harmonics. 

\end{document}